\newtheorem{theorem}{Theorem}[section]
\newtheorem*{theorem*}{Theorem}
\newtheorem{remark}[theorem]{Remark}
\newtheorem{lemma}[theorem]{Lemma}
\newtheorem{definition}[theorem]{Definition}
\newtheorem{proposition}[theorem]{Proposition}
\newtheorem*{proposition*}{Proposition}
\newcommand{\R}{\mathbb{R}}
\newcommand{\be}{\begin{eqnarray*}}
\newcommand{\ee}{\end{eqnarray*}}
\newcommand{\ba}{\begin{align*}}
\newcommand{\bpm}{\begin{pmatrix}}
\newcommand{\epm}{\end{pmatrix}}
\newcommand{\bx}{\boldsymbol{x}}
\begin{document}
\title{On the Fueter-Sce  theorem for generalized partial-slice monogenic functions}

\author{Zhenghua Xu$^1$ \thanks{This work was partially supported by  the Anhui Provincial Natural Science Foundation (No. 2308085MA04) and the National Natural Science Foundation of China (No. 11801125).} Irene Sabadini$^2$ \thanks{This work was partially supported by PRIN 2022 {\em Real and Complex Manifolds: Geometry and Holomorphic Dynamics}.}\\
\emph{$^1$
\small School of Mathematics, Hefei University of Technology,}\\
\emph{\small  Hefei, 230601, P.R. China}\\
\emph{\small E-mail address: zhxu@hfut.edu.cn}
\\
\emph{\small $^2$ Politecnico di Milano, Dipartimento di Matematica,}\\
\emph{\small Via E. Bonardi, 9, 20133 Milano Italy} \\
\emph{\small E-mail address:   irene.sabadini@polimi.it}
}

\maketitle

\begin{abstract}
In a recent paper, we introduced the concept of generalized partial-slice monogenic functions. The class of these functions includes both the theory of monogenic functions and of slice monogenic functions with values in a Clifford algebra. In this paper, we prove a version of the  Fueter-Sce theorem in this new setting, which allows to construct monogenic functions in higher dimensions starting from generalized partial-slice monogenic functions. We also prove that an alternative construction can be obtained by using the dual Radon transform. To this end, we provide a study of the generalized CK-extension.
\end{abstract}
{\bf Keywords:}\quad Functions of a hypercomplex variable;  monogenic functions; slice monogenic functions; Clifford algebras\\
{\bf MSC (2020):}\quad  Primary: 30G35;  Secondary: 32A30, 32A26
\section{Introduction}
In quaternionic analysis, the celebrated Fueter  theorem asserts that every holomorphic  function of one complex variable induces  a regular function, see \cite{Fueter}. Fueter's result is based on a two steps procedure. The first step consists in considering  a holomorphic function $f_{0}(z)=u(x,y)+iv(x,y)$ $(u,v\in \mathbb{R},z=x+yi\in \mathbb{C})$  defined on a domain $D\subseteq \mathbb{C}$ symmetric with respect to the real axis and satisfying $\overline{f_{0}(z)}=f_{0}(\overline{z})$; then one constructs the induced quaternionic-valued function of the quaternionic variable:
$$\overrightarrow{f_{0}}(q)=u(q_{0},|\underline{q}|)+ \frac{\underline{q}}{|\underline{q}|}v(q_{0},|\underline{q}|),$$
where $q=q_{0}+q_{1}i+q_{2}j+q_{3}k$, i.e.  $(q_{0}, q_{1},q_{2},q_{3})\in \mathbb{R}^{4}$  is  the real coordinate  of the quaternion $q$ with respect to  the standard basis $\{1,\,i,\,j, \,k\}$.  Here the quaternion  $q=q_{0}+q_{1}i+q_{2}j+q_{3}k$ is split into its real part $q_{0}$ and imaginary part $\underline{q}=q_{1}i+q_{2}j+q_{3}k$, with $|\underline{q}|=\sqrt{\sum_{m=1}^{3}q_{m}^2}$. The second step consists in applying the four-dimensional Laplacian $\Delta_q=\sum_{m=0}^{3}\frac{\partial^{2}}{\partial q_{m}^{2}}$ to $\overrightarrow{f_{0}}(q)$, where $q=q_{0}+q_{1}i+q_{2}j+q_{3}k$ is identified with the $4$-tuple $(q_{0}, q_{1},q_{2},q_{3})\in \mathbb{R}^{4}$. The function $\breve f(q)=\Delta_q \overrightarrow{f_{0}}(q)$
satisfies
 $$D_{q}\breve f(q)=\breve f(q)D_{q}=0,$$
 namely $\breve f(q)$ is both left and right regular,
where  $D_{q}=\frac{\partial}{\partial q_{0}}+i\frac{\partial}{\partial q_{1}}+j\frac{\partial}{\partial q_{2}}+k\frac{\partial}{\partial q_{3}}$  is the  Cauchy-Riemann-Fueter operator.

In 1957, the Fueter  theorem  was generalized by Sce \cite{Sce} (see \cite{Colombo-Sabadini-Struppa-20} for the English translation) to functions defined on open sets in the Euclidean space $\mathbb{R}^{n+1}$ for odd $n\in \mathbb{N}$.  Sce's result asserts in particular that, given a holomorphic  function $f_{0}$ as above, the induced function
$$\overrightarrow{f_{0}}(x)=u(x_{0},|\underline{x}|)+ \frac{\underline{x}}{|\underline{x}|}v(x_{0},|\underline{x}|)$$
satisfies
 $$D_{x}\Delta_{x}^{\frac{n-1}{2}} \overrightarrow{f_{0}}(x)=(\Delta_{x}^{\frac{n-1}{2}} \overrightarrow{f_{0}}(x))D_{x}=0,$$
where $x= x_{0}+\underline{x}=\sum_{m=0}^{n}x_{m}e_{m}\in \mathbb{R}^{n+1}$ with  $\{e_1,e_2,\ldots, e_n\}$ being the generators  of the Clifford algebra  $\mathbb{R}_{n}$,  $D_{x}= \sum_{m=0}^{n}e_{m}\frac{\partial}{\partial x_{m}}$  is the generalized  Cauchy-Riemann operator, and $\Delta_{x}= \sum_{m=0}^{n}\frac{\partial^{2}}{\partial x_{m}^{2}}$ is the Laplacian in $\mathbb{R}^{n+1}$.

In summary, Sce's result generates  monogenic functions from holomorphic functions. However, there are dimensional limitations to avoid  getting fractional powers of the Laplacian. By using the Fourier multiplier definition of the fractional Laplacian,  Qian   generalized   the Fueter's and Sce's result to any dimension \cite{Qian-97}. Nowadays, this deep result is known as the Fueter-Sce-Qian theorem. It is one of the most fundamental results in hypercomplex analysis, which has been considered in various settings, see e.g.  \cite{Dong-Qian-21,Eelbode,Fei,Kou,Pena-06,Sommen-00}   and the survey \cite{Qian-15}.

It is interesting to note that the intermediate step of the Fueter construction, namely the first step, was not studied as a function theory until the theory of slice monogenic functions was considered.

In fact, the induced function
$$\overrightarrow{f_{0}}(x)=u(x_{0},|\underline{x}|)+ \frac{\underline{x}}{|\underline{x}|}v(x_{0},|\underline{x}|)$$
is such that the pair $(u,v)$ satifies the Cauchy-Riemann system. If one allows the functions $u,v$ to have values in the Clifford algebra $\mathbb R_n$, it is immediate to see that the function $\overrightarrow{f_{0}}(x)$ is slice monogenic. This observation gave rise to various results and to applications in operator theory: we refer the reader to \cite{Colombo-Sabadini-Sommen-10,Colombo-Sabadini-Struppa-20} for more information and to \cite{Colombo-Sabadini-Sommen-13, Dong-Qian-16} for the related problem of the inversion of the Fueter-Sce-Qian map.

Recently, the present authors have introduced the concept of generalized partial-slice monogenic functions which has been designed to unify the two theories of monogenic functions and of slice monogenic functions; see \cite{Xu-Sabadini} and a subsequent work on conformal invariance \cite{Ding-Xu}.  In this article, we address the problem of  proving a version of the  Fueter-Sce  theorem for generalized partial-slice monogenic functions. The result provides  a method of constructing monogenic functions in higher dimensions starting from  monogenic functions in lower dimensions.
This result contains the classical Fueter-Sce result for slice monogenic functions as a particular case. It is worthwhile to note that it is available another method to construct monogenic functions starting from slice monogenic functions, making use of the Radon transform and its dual, see \cite{Colombo-Sabadini-Soucek-15}, so it is of interest to ask if the construction works also in the present framework. The answer is positive and we address this aspect in the last part of the paper.

The plan of the article is as follows. Section 2 contains some preliminary notions on Clifford algebras and on generalized partial-slice monogenic functions. In Section 3,  we prove the counterpart of the Fueter-Sce  theorem for generalized partial-slice monogenic functions, see Theorems \ref{Fueter-theorem} and \ref{Fueter-theorem-GSM}.  In Section 4, following \cite{Xu-Sabadini}, we introduce a slice  Cauchy-Kovalevskaya extension of real analytic functions defined on some domain in $\mathbb R^{p+1}$ and then describe a Fueter-Sce theorem in connection with the generalized CK-extension in Theorem \ref{CK-Fueter-relation}. Finally, in Section 5 we prove a connection (Theorem \ref{GCK-Dual-relation}) between the  generalized CK-extension and the dual Radon transform for generalized partial-slice monogenic functions.

\section{Preliminaries}
In this section, we collect some preliminary results on Clifford algebras and on   generalized partial-slice monogenic
functions recently introduced and studied in \cite{Xu-Sabadini}, to which we refer the reader for more information.
\subsection{Clifford algebras}
Let $\{e_1,e_2,\ldots, e_n\}$ be a standard orthonormal basis for the $n$-dimensional real Euclidean space  $\mathbb{R}^n$. A real Clifford algebra, denoted by $\mathbb{R}_{n}$, is generated  by these basis elements assuming that   $$e_i e_j + e_j e_i= -2\delta_{ij}, \quad 1\leq i,j\leq n,$$
where $\delta_{ij}$ is the Kronecker symbol. \\
Hence,  every element in  $\mathbb{R}_{n}$ can be written as
 $$a=\sum_A a_Ae_A, \quad a_A\in \mathbb{R},$$
 where
$$e_A=e_{j_1}e_{j_2}\cdots e_{j_r},$$
and $A=\{j_1,j_2, \cdots, j_r\}\subseteq\{1, 2, \cdots, n\}$ and $1\leq j_1< j_2 < \cdots < j_r \leq n,$  $ e_\emptyset=e_0=1$.

As a real  vector space, the dimension of Clifford algebra $\mathbb{R}_n$  is $2^{n}$. The norm of $a$ is defined by $|a|= ({\sum_{A}|a_{A}|^{2}} )^{\frac{1}{2}}.$ For $k=0,1,\ldots,n$, the real linear subspace $\mathbb{R}_n^k$ of $\mathbb{R}_n$, called $k$-vectors, is  generated by the $\begin{pmatrix} n\\k\end{pmatrix}$ elements of the form
 $$e_A=e_{i_1}e_{i_2}\cdots e_{i_k},\quad 1\leq i_1<i_2<\cdots<i_k\leq n.$$
In particular, the element $a_{0}=a_{\emptyset}$ is called scalar part of $a$.

The Clifford conjugation of $a$ is  defined as
$$\overline{a} =\sum_Aa_A\overline{e_A},$$
 where $\overline{e_{j_1}\ldots e_{j_r}}=\overline{e_{j_r}}\ldots\overline{e_{j_1}},\ \overline{e_j}=-e_j,1\leq j\leq n,\ \overline{e_0}=e_0=1$.

 A typical  subset of Clifford numbers $\mathbb{R}_n$ is the set of the so-called paravectors   $\mathbb{R}_n^{0} \oplus \mathbb{R}_n^{1}$. This subset will  be identified with $\mathbb{R}^{n+1}$ via the map
$$(x_0,x_1,\ldots,x_n) \longmapsto   x=x_{0}+\underline{x}=\sum_{i=0}^{n}e_ix_i.$$
For a paravector $x\neq0$, its norm is given by $|x|=(x\overline x)^{1/2}$  and so its inverse is given by $x^{-1}=  \overline{x}|x|^{-2}.$

\subsection{Generalized partial-slice monogenic functions}

In the sequel, let $p$ and $q$ be a nonnegative and a positive integer, respectively. We consider functions $f:\Omega\longrightarrow  \mathbb{R}_{p+q}$ where $\Omega\subseteq\R^{p+q+1}$ is a domain.

An element $\bx_p$ will be identified with a paravector in $\mathbb{R}_{p}$, while an element  $\bx\in\R^{p+q+1}$ will be identified with a paravector in $\mathbb{R}_{p+q}$, and we shall write it as
$$\bx=\bx_p+\underline{\bx}_q \in\R^{p+1}\oplus\R^q, \quad \bx_p=\sum_{i=0}^{p}x_i e_i,\ \underline{\bx}_q=\sum_{i=p+1}^{p+q}x_i e_i.$$
Throughout the paper, this splitting of $\bx\in\R^{p+q+1}$ as $\R^{p+1}\oplus\R^q$ is fixed.

Similarly, the generalized Cauchy-Riemann operator $D_{\bx}$ is split as

\begin{equation}\label{Dxx}
D_{\bx}=D_{\bx_p} +D_{\underline{\bx}_q}, \quad D_{\bx_p} =\sum_{i=0}^{p}e_i\partial_{x_i}, \
D_{\underline{\bx}_q} =\sum_{i=p+1}^{p+q}e_i\partial_{x_i}.
\end{equation}

Denote by $\mathbb{S}$ the sphere of unit $1$-vectors in $\mathbb R^q$, whose elements $(x_{p+1},\ldots, x_{p+q})$ are identified with $\underline{\bx}_q=\sum_{i=p+1}^{p+q}x_i e_i$, i.e.
$$\mathbb{S}=\big\{\underline{\bx}_q: \underline{\bx}_q^2 =-1\big\}=\big\{\underline{\bx}_q=\sum_{i=p+1}^{p+q}x_i e_i:\sum_{i=p+1}^{p+q}x_i^{2}=1\big\}.$$
Note that, for $\underline{\bx}_q\neq0$, there exists a uniquely determined $r\in \mathbb{R}^{+}=\{x\in \mathbb{R}: x>0\}$ and $\underline{\omega}\in \mathbb{S}$, such that $\underline{\bx}_q=r\underline{\omega}$, more precisely
 $$r=|\underline{\bx}_q|, \quad \underline{\omega}=\frac{\underline{\bx}_q}{|\underline{\bx}_q|}. $$
 When $\underline{\bx}_q= 0$, set $r=0$ and $\underline{\omega}$ is not uniquely defined, in fact for every $\underline{\omega}\in \mathbb{S}$ we have $\bx=\bx_p+\underline{\omega} \cdot 0$.

The upper half-space $\mathrm{H}_{\underline{\omega}}$ in $\mathbb{R}^{p+2}$ associated with $\underline{\omega}\in \mathbb{S}$ is defined by
$$\mathrm{H}_{\underline{\omega}}=\{\bx_p+r\underline{\omega}, \bx_p \in\R^{p+1}, r\geq0 \},$$
and it is clear from the previous discussion that
$$ \R^{p+q+1}=\bigcup_{\underline{\omega}\in \mathbb{S}} \mathrm{H}_{\underline{\omega}},$$
and
$$ \R^{p+1}=\bigcap_{\underline{\omega}\in \mathbb{S}} \mathrm{H}_{\underline{\omega}}.$$

In the sequel, we shall make use of the notation
$$
\Omega_{\underline{\omega}}:=\Omega\cap (\mathbb{R}^{p+1} \oplus \underline{\omega} \mathbb{R})\subseteq \mathbb{R}^{p+2}
$$
where $\Omega$ is a domain  in $\mathbb{R}^{p+q+1}$.

Now we introduce  the  conception of  generalized partial-slice monogenic functions \cite{Xu-Sabadini}.
\begin{definition}[Generalized partial-slice monogenic functions] \label{definition-slice-monogenic}
 Let $\Omega$ be a domain in $\mathbb{R}^{p+q+1}$. A function $f :\Omega \rightarrow \mathbb{R}_{p+q}$ is called left  generalized partial-slice monogenic of type $(p,q)$ if, for all $ \underline{\omega} \in \mathbb S$, its restriction $f_{\underline{\omega}}$ to $\Omega_{\underline{\omega}}\subseteq \mathbb{R}^{p+2}$  has continuous partial derivatives and  satisfies
$$D_{\underline{\omega}}f_{\underline{\omega}}(\bx):=(D_{\bx_p}+\underline{\omega}\partial_{r}) f_{\underline{\omega}}(\bx_p+r\underline{\omega})=0,$$
for all $\bx=\bx_p+r\underline{\omega} \in \Omega_{\underline{\omega}}$.
 \end{definition}
We denote by $\mathcal {GSM}(\Omega)$ (or $\mathcal {GSM}^{L}(\Omega)$ when needed) the function class of  all  left generalized partial-slice monogenic functions  of type $(p,q)$  in $\Omega$.  Throughout this article,  we always deal with left generalized partial-slice monogenic functions  of type $(p,q)$ and  hence omit to specify type $(p,q)$ when the context is clear.  It is immediate to verify that $\mathcal {GSM}^{L}(\Omega)$ is a right Clifford-module over $\mathbb R_{p+q}$.

Likewise, denote by $\mathcal {GSM}^{R}(\Omega)$ the left Clifford module of  all right  generalized partial-slice monogenic functions of type $(p,q)$ $f:\Omega  \rightarrow \mathbb{R}_{p+q}$ which are defined by requiring that the restriction $f_{\underline{\omega}}$ to $\Omega_{\underline{\omega}}$ satisfies
$$f_{\underline{\omega}}(\bx)D_{\underline{\omega}}:={f_{\underline{\omega}} (\bx_p+r\underline{\omega})D_{\bx_p}}+ \partial_{r}f_{\underline{\omega}} (\bx_p+r\underline{\omega})\underline{\omega}=0, \quad \bx=\bx_p+r\underline{\omega} \in \Omega_{\underline{\omega}},$$
for all  $ \underline{\omega} \in \mathbb S$.

Let $k\in \mathbb{N}\cup\{\infty\}.$ In a classical way, we denote by $C^k(\Omega)$ the space of $k$-times continuously differentiable $\mathbb{R}_{p+q}$-valued functions in $\Omega$. In particular,  $C^0(\Omega)$ ($C(\Omega)$ for short) is the space of  continuously  $\mathbb{R}_{p+q}$-valued functions in $\Omega$.  In the sequel, we will use the notion of monogenic functions, see \cite{Brackx}, which is recalled in the following  definition.
\begin{definition}[Monogenic functions]\label{monogenic-Clifford}
Let $\Omega $ be a domain in $\mathbb{R}^{p+q+1}$ and let  $f \in C^1(\Omega)$.    The function $f=\sum_{A}  e_{A}   f_{A}$ is called  (left)  monogenic in $\Omega $ if it satisfies the generalized Cauchy-Riemann equation
$$ D_{\bx}f(\bx)=\sum _{i=0}^{p+q}e_{i} \frac{\partial f}{\partial x_{i}}(\bx)
= \sum _{i=0}^{n} \sum_{A} e_{i}e_{A} \frac{\partial f_{A}}{\partial x_{i}}(\bx)=0, \quad \bx\in \Omega. $$
The set of monogenic functions on $\Omega$ will be denoted by $\mathcal{M}(\Omega)$.
\end{definition}

\begin{remark}\label{rem32}
{\rm
 When $(p,q)=(n-1,1)$, the  notion of generalized partial-slice monogenic functions in Definition  \ref{definition-slice-monogenic} coincides with the notion of  classical monogenic functions  defined in $\Omega\subseteq\mathbb{R}^{n+1}$   with values in the Clifford algebra $\mathbb{R}_{n}$, which is denoted by $\mathcal {M}(\Omega)$. For more details on the theory of monogenic functions, see for instance \cite{Brackx,Colombo-Sabadini-Sommen-Struppa-04,Delanghe-Sommen-Soucek,Gurlebeck}.

When $(p,q)=(0,n)$, Definition  \ref{definition-slice-monogenic}  boils down to that one of  slice monogenic functions defined in $\mathbb{R}^{n+1}$ and   with values in the Clifford algebra $\mathbb{R}_{n}$, which is denoted by $\mathcal{SM}(\Omega)$; see \cite{Colombo-Sabadini-Struppa-09} or \cite{Colombo-Sabadini-Struppa-11}.}
\end{remark}

\begin{definition} \label{slice-domain}
 Let $\Omega$ be a domain in $\mathbb{R}^{p+q+1}$.

1.   $\Omega$ is called  slice domain if $\Omega\cap\mathbb R^{p+1}\neq\emptyset$  and $\Omega_{\underline{\omega}}$ is a domain in $\mathbb{R}^{p+2}$ for every  $\underline{\omega}\in \mathbb{S}$.

2.   $\Omega$   is called  partially  symmetric with respect to $\mathbb R^{p+1}$ (p-symmetric for short) if, for   $\bx_{p}\in\R^{p+1}, r \in \mathbb R^{+},$ and $ \underline{\omega}  \in \mathbb S$,
$$\bx=\bx_p+r\underline{\omega} \in \Omega\Longrightarrow [\bx]:=\bx_p+r \mathbb S=\{\bx_p+r \underline{\omega}, \ \  \underline{\omega}\in \mathbb S\} \subseteq \Omega. $$
 \end{definition}

Denote by $\mathcal{Z}_{f}(\Omega)$  the zero set of the function $f:\Omega\subseteq\mathbb{R}^{n+1}\rightarrow \mathbb{R}_{n}$.

We recall   an identity theorem for generalized partial-slice monogenic functions over slice domains.
\begin{theorem}  {\bf(Identity theorem)}\label{Identity-theorem}
Let $\Omega\subseteq \mathbb{R}^{p+q+1}$ be a  slice domain and $f,g:\Omega\rightarrow \mathbb{R}_{p+q}$ be   generalized partial-slice monogenic functions.
If there is an imaginary $\underline{\omega}  \in \mathbb S $ such that $f=g$ on a   $(p+1)$-dimensional smooth manifold in $\Omega_{\underline{\omega}}$, then $f\equiv g$ in  $\Omega$.
\end{theorem}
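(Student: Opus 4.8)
The plan is to argue by linearity and then slice by slice, reducing everything to a local uniqueness statement for solutions of an elliptic first-order system. First I would set $h=f-g$; since the operator $D_{\underline{\omega}}$ is linear, $D_{\underline{\omega}}h_{\underline{\omega}}=D_{\underline{\omega}}f_{\underline{\omega}}-D_{\underline{\omega}}g_{\underline{\omega}}=0$, so $h$ is again generalized partial-slice monogenic, and the hypothesis becomes $h\equiv 0$ on a $(p+1)$-dimensional smooth manifold $M\subseteq\Omega_{\underline{\omega}_0}$ for some fixed $\underline{\omega}_0\in\mathbb{S}$. The goal is then to show $h\equiv 0$ on all of $\Omega$.

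The key observation is that, for each $\underline{\omega}\in\mathbb{S}$, the restriction $h_{\underline{\omega}}$ is a monogenic function of the $p+2$ real variables $(x_0,\dots,x_p,r)$ on the domain $\Omega_{\underline{\omega}}\subseteq\mathbb{R}^{p+2}$. Indeed, since $\underline{\omega}\in\mathbb{R}^q$ is orthogonal to $e_1,\dots,e_p$ and $\underline{\omega}^2=-1$, the elements $e_1,\dots,e_p,\underline{\omega}$ mutually anticommute and square to $-1$, so together with $1$ they generate a copy of $\mathbb{R}_{p+1}$, and $D_{\underline{\omega}}=\Dxp+\underline{\omega}\partial_r$ is precisely the corresponding generalized Cauchy--Riemann operator. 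This operator is elliptic: its principal symbol at a covector $\xi=(\xi_0,\dots,\xi_p,\xi_r)$ is the invertible paravector $\xi_0+\sum_{i=1}^p e_i\xi_i+\underline{\omega}\xi_r$. Moreover, the Clifford conjugate satisfies $\overline{D_{\underline{\omega}}}\,D_{\underline{\omega}}=\sum_{i=0}^p\partial_{x_i}^2+\partial_r^2$, the $(p+2)$-dimensional Laplacian, so every component of $h_{\underline{\omega}}$ is harmonic and hence real-analytic on $\Omega_{\underline{\omega}}$.

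Next I would establish the local step: if a monogenic function $F$ on a domain $U\subseteq\mathbb{R}^{p+2}$ vanishes on a $(p+1)$-dimensional smooth submanifold $M\subseteq U$, then $F\equiv0$ on $U$. Because the operator is elliptic, $M$ (a hypersurface) is non-characteristic at each of its points, i.e. the symbol evaluated at the unit normal $\nu$ is invertible. Vanishing of $F$ on $M$ forces all tangential derivatives of $F$ to vanish along $M$; the equation $D_{\underline{\omega}}F=0$ then expresses the normal derivative $\partial_\nu F$ through the (vanishing) tangential ones, so $\partial_\nu F=0$ on $M$, and differentiating the equation and iterating shows that every partial derivative of $F$ vanishes on $M$. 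Real-analyticity then gives that $F$ vanishes to infinite order along $M$; since the set of infinite-order zeros is both open (by analyticity) and closed (by continuity of the derivatives) and nonempty, it equals the connected domain $U$, whence $F\equiv0$.

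Applying this to $F=h_{\underline{\omega}_0}$ yields $h_{\underline{\omega}_0}\equiv0$ on $\Omega_{\underline{\omega}_0}$; in particular $h=0$ on $\Omega\cap\mathbb{R}^{p+1}$, which, by the slice-domain hypothesis, is a nonempty open subset of $\mathbb{R}^{p+1}$ and thus a $(p+1)$-dimensional submanifold sitting inside every slice $\Omega_{\underline{\omega}}$ (recall $\mathbb{R}^{p+1}=\bigcap_{\underline{\omega}}\mathrm{H}_{\underline{\omega}}$). Re-running the local step with $\underline{\omega}_0$ replaced by an arbitrary $\underline{\omega}\in\mathbb{S}$ and $M$ replaced by $\Omega\cap\mathbb{R}^{p+1}$ (again using that $\Omega_{\underline{\omega}}$ is a domain) gives $h_{\underline{\omega}}\equiv0$ on $\Omega_{\underline{\omega}}$ for every $\underline{\omega}$. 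Since $\Omega=\bigcup_{\underline{\omega}\in\mathbb{S}}\Omega_{\underline{\omega}}$, I conclude $h\equiv0$ on $\Omega$, i.e. $f\equiv g$. I expect the main obstacle to be precisely this local step: propagating the vanishing from a codimension-one manifold to the whole slice genuinely requires both the real-analyticity of the slice restrictions and the non-characteristic property coming from ellipticity, and one must be careful that the connectedness of each $\Omega_{\underline{\omega}}$, guaranteed by the slice-domain assumption, is what legitimizes the analytic-continuation conclusion.
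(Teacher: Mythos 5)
Your proof is correct and follows essentially the same route as the argument this theorem is recalled from (see \cite{Xu-Sabadini}): pass to $h=f-g$, identify each slice restriction $h_{\underline{\omega}}$ with a classical monogenic function of $p+2$ real variables valued in $\mathbb{R}_{p+q}$, obtain $h\equiv 0$ on $\Omega_{\underline{\omega}_0}$ from the identity theorem for monogenic functions, and then propagate the vanishing to every other slice through the nonempty open set $\Omega\cap\mathbb{R}^{p+1}$ provided by the slice-domain hypothesis, finally covering $\Omega$ by its slices. The only difference is that you prove the slice-wise uniqueness step yourself (non-characteristic hypersurface argument plus real analyticity, the latter needing elliptic regularity to pass from $C^1$ to smoothness before applying $\overline{D}_{\underline{\omega}}D_{\underline{\omega}}=\Delta$), whereas the cited proof simply invokes the known identity theorem for monogenic functions from classical Clifford analysis.
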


The identity theorem for generalized partial-slice monogenic  functions allows to formulate  a representation formula.
\begin{theorem}  {\bf(Representation Formula)}  \label{Representation-Formula-SM}
Let $\Omega\subseteq \mathbb{R}^{p+q+1}$ be a p-symmetric slice domain and $f:\Omega\rightarrow \mathbb{R}_{p+q}$ be a  generalized partial-slice monogenic function.  Then, for any $\underline{\omega}\in \mathbb{S}$ and for $\bx_p+r\underline{\omega} \in \Omega$,
\begin{equation}\label{Representation-Formula-eq}
f(\bx_p+r \underline{\omega})=\frac{1}{2} (f(\bx_p+r\underline{\eta} )+f(\bx_p-r\underline{\eta}) )+
\frac{ 1}{2} \underline{\omega}\underline{\eta} (  f(\bx_p-r\underline{\eta} )-f(\bx_p+r\underline{\eta})),
\end{equation}
for any $\underline{\eta}\in \mathbb{S}$.

Moreover, the following two functions do not depend on $\underline{\eta}$:
$$F_1(\bx_p,r)=\frac{1}{2} (f(\bx_p+r\underline{\eta} )+f(\bx_p-r\underline{\eta} ) ),$$
$$F_2(\bx_p,r)=\frac{ 1}{2}\underline{\eta}(  f(\bx_p-r\underline{\eta} )-f(\bx_p+r\underline{\eta})).$$
\end{theorem}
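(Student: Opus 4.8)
The plan is to deduce the formula from the Identity Theorem (Theorem~\ref{Identity-theorem}) by exhibiting a candidate function built from the right-hand side and showing it is generalized partial-slice monogenic and agrees with $f$ on one slice. Fix $\underline{\eta}\in\mathbb{S}$ and define $g:\Omega\to\mathbb{R}_{p+q}$ by letting $g(\bxp+r\underline{\omega})$ be the right-hand side of \eqref{Representation-Formula-eq}. Since $\Omega$ is p-symmetric, the points $\bxp\pm r\underline{\eta}$ lie in $\Omega$ whenever $\bxp+r\underline{\omega}\in\Omega$, so $g$ is well defined; at $r=0$ both terms collapse to $f(\bxp)$, so the value does not depend on the non-unique choice of $\underline{\omega}$. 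Substituting $\underline{\omega}=\pm\underline{\eta}$ and using $\underline{\eta}^2=-1$ shows at once that $g=f$ on the whole slice $\Omega_{\underline{\eta}}$, which supplies the coincidence set needed for the Identity Theorem.

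The core of the argument is to check that $g\in\mathcal{GSM}(\Omega)$. First I would observe that $h(\bxp,t):=f(\bxp+t\underline{\eta})$, regarded as a function of $\bxp$ and of a real parameter $t$ ranging over the two-dimensional plane $\mathbb{R}^{p+1}\oplus\underline{\eta}\mathbb{R}$, satisfies $(\Dxp+\underline{\eta}\partial_t)h=0$: for $t>0$ this is the defining equation of $f_{\underline{\eta}}$, while for $t<0$ it is the equation of $f_{-\underline{\eta}}$ after the substitution $r=-t$, the two cases fusing into a single relation valid for all $t$. The p-symmetry makes the slice invariant under $t\mapsto-t$, so I may split $h=h_e+h_o$ into its even and odd parts in $t$; matching parities in $(\Dxp+\underline{\eta}\partial_t)h=0$ yields the coupled system $\Dxp h_e=-\underline{\eta}\partial_t h_o$ and $\Dxp h_o=-\underline{\eta}\partial_t h_e$.

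In this notation the candidate becomes $g(\bxp+r\underline{\omega})=h_e(\bxp,r)-\underline{\omega}\underline{\eta}\,h_o(\bxp,r)$, and I would compute $(\Dxp+\underline{\omega}\partial_r)g_{\underline{\omega}}$ term by term. I expect the principal obstacle to be the non-commutativity hidden in $\Dxp(\underline{\omega}\underline{\eta}\,h_o)$, since a priori the generators $e_i$ need not commute with the constant bivector $\underline{\omega}\underline{\eta}$. The key point that removes it is that each $e_i$, $0\le i\le p$, commutes with $\underline{\omega}\underline{\eta}$ --- trivially for $e_0=1$, and for $1\le i\le p$ because $e_i$ is orthogonal to both $\underline{\omega}$ and $\underline{\eta}$ (whose indices lie in $\{p+1,\dots,p+q\}$), so carrying $e_i$ past the product flips the sign twice. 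Hence $\Dxp$ passes through $\underline{\omega}\underline{\eta}$, and inserting the coupled system together with $\underline{\omega}^2=\underline{\eta}^2=-1$ makes the four resulting terms cancel in pairs, giving $(\Dxp+\underline{\omega}\partial_r)g_{\underline{\omega}}=0$ for every $\underline{\omega}\in\mathbb{S}$. The Identity Theorem then forces $f\equiv g$, which is exactly \eqref{Representation-Formula-eq}.

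It remains to prove that $F_1$ and $F_2$ are independent of $\underline{\eta}$, and this follows from the formula just established. Writing \eqref{Representation-Formula-eq} as $f(\bxp+r\underline{\omega})=F_1(\bxp,r)+\underline{\omega}F_2(\bxp,r)$, two admissible choices $\underline{\eta},\underline{\eta}'$ give $(F_1-F_1')+\underline{\omega}(F_2-F_2')=0$ for all $\underline{\omega}\in\mathbb{S}$. Evaluating at two distinct units $\underline{\omega}_1\neq\underline{\omega}_2$ and subtracting gives $(\underline{\omega}_1-\underline{\omega}_2)(F_2-F_2')=0$; as a nonzero $1$-vector is invertible in $\mathbb{R}_{p+q}$, this yields $F_2=F_2'$ and hence $F_1=F_1'$.
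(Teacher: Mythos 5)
Your proposal is correct and follows exactly the route the paper indicates: the paper presents the Representation Formula as a consequence of the Identity Theorem (Theorem \ref{Identity-theorem}), and your argument---building the candidate $g$ from the right-hand side, verifying it is generalized partial-slice monogenic via the even--odd splitting and the commutation of $D_{\bx_p}$ with the bivector $\underline{\omega}\underline{\eta}$, checking agreement on the slice $\Omega_{\underline{\eta}}$, and invoking the Identity Theorem---is the standard way this deduction is carried out. The final step on the $\underline{\eta}$-independence of $F_1,F_2$ (invertibility of a nonzero $1$-vector) is also sound.
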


\subsection{Generalized partial-slice  functions}
Throughout this article, let $D \subseteq \mathbb{R}^{p+2}$  be a domain, which is invariant under the reflection  of the $(p+2)$-th variable, i.e.
$$ \bx':=(\bx_p,r) \in D \Longrightarrow   \bx_\diamond':=(\bx_p,-r)  \in D.$$
The  \textit{p-symmetric completion} $ \Omega_{D}\subseteq\mathbb{R}^{p+q+1}$ of  $D$ is defined by
$$\Omega_{D}=\bigcup_{\underline{\omega} \in \mathbb{S}} \, \big \{\bx_p+r\underline{\omega}\  : \ \exists \ \bx_p \in \mathbb{R}^{p+1},\ \exists \ r\geq 0,\  \mathrm{s.t.} \ (\bx_p,r)\in D \big\}.$$

By Definition  \ref{slice-domain}, it is easy to see that a domain $\Omega \subseteq \mathbb{R}^{p+q+1}$ is p-symmetric if and only if $\Omega=\Omega_{D}$ for some domain $D \subseteq\mathbb{R}^{p+2}$.
\begin{definition}
Let $D\subseteq \mathbb{R}^{p+2}$ be  a domain, invariant under the reflection  of the $(p+2)$-th variable.
A function $f: \Omega_D\longrightarrow  \mathbb{R}_{p+q}$ of the form
  \begin{equation}\label{genpslice}
  f(\bx)=F_1(\bx')+\underline{\omega} F_2(\bx'), \qquad   \bx=\bx_p+r\underline{\omega}   \in \Omega_{D},\ \underline{\omega}\in \mathbb{S},
  \end{equation}
 where the $\mathbb{R}_{p+q} $-valued components  $F_1, F_2$ of $f$satisfy
 \begin{equation}\label{even-odd}
 F_1(\bx_{\diamond}')= F_1(\bx'), \qquad  F_2(\bx_{\diamond}')=-F_2(\bx'), \qquad  \bx' \in D,  \end{equation}
is called a (left)  generalized partial-slice function.
\end{definition}
\begin{remark}\label{remodd}
The condition in (\ref{even-odd}) means that $(F_1,F_2)$  is an even-odd pair in the $(p+2)$-th variable and so $F_2(\bx_p,0)=0$.
\end{remark}
Denote by $\mathcal{GS}(\Omega_{D})$ the set of  all induced  generalized partial-slice functions  on $\Omega_{D}$.  When the components $F_1,F_2$ are of class $C^k(D)$, we say that the generalized partial-slice function is of class $C^k(\Omega_{D})$ and  denote the set of all such functions by ${\mathcal{GS}}^{k}(\Omega_{D})$.
\begin{definition}\label{definition-GSR}
Let $f(\bx)=F_1(\bx')+\underline{\omega} F_2(\bx') \in {\mathcal{GS}}^{1}(\Omega_{D})$. The function $f$ is called generalized partial-slice monogenic  of type $(p,q)$ if  $F_1, F_2$ satisfy  the generalized Cauchy-Riemann equations
 \begin{eqnarray}\label{C-R}
 \left\{
\begin{array}{ll}
D_{\bx_p}  F_1(\bx')- \partial_{r} F_2(\bx')=0,
\\
 \overline{D}_{\bx_p}  F_2(\bx')+ \partial_{r} F_1(\bx')=0,
\end{array}
\right.
\end{eqnarray}
for all $\bx'\in D$.
\end{definition}
We denote by $\mathcal {GSR}(\Omega_D)$ the set of all generalized partial-slice monogenic functions {on $\Omega_D$}.
As before, the type $(p,q)$ will be omitted in the sequel.

\begin{remark} When $(p,q)=(0,n)$ a function of the form \eqref{genpslice} is called slice function, and if it satisfies system \eqref{C-R} it is a slice monogenic function.
\end{remark}
Now we recall a relationship between the set of functions  $\mathcal {GSM}$ and $\mathcal {GSR}$ defined in  p-symmetric domains, see Theorem 4.5 in \cite{Xu-Sabadini}. Notice that the second assertion follows from the Representation Formula.

\begin{theorem} \label{relation-GSR-GSM}

(i) For a p-symmetric domain $\Omega=\Omega_{D}$ with $\Omega  \cap \mathbb{R}^{p+1}= \emptyset$, it holds that $\mathcal {GSM}(\Omega) \supsetneqq \mathcal {GSR}(\Omega_{D})$.

(ii) For a p-symmetric domain $\Omega=\Omega_{D}$ with $\Omega  \cap \mathbb{R}^{p+1}\neq \emptyset$,  it holds  that $\mathcal {GSM}(\Omega) = \mathcal {GSR}(\Omega_{D})$.
\end{theorem}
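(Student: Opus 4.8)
The plan is to reduce both assertions to one slice-wise computation, and then to separate the two cases according to whether the Representation Formula is available. First I would record the basic identity relating the two monogenicity conditions. Fix $\underline{\omega}\in\mathbb{S}$ and let $f=F_1(\bx')+\underline{\omega}F_2(\bx')$ be a generalized partial-slice function, so that its restriction to $\Omega_{\underline{\omega}}$ is $f_{\underline{\omega}}(\bx_p+r\underline{\omega})=F_1(\bx_p,r)+\underline{\omega}F_2(\bx_p,r)$. Treating $\underline{\omega}$ as constant on the slice and using $e_0\underline{\omega}=\underline{\omega}$, the anticommutation $e_i\underline{\omega}=-\underline{\omega}e_i$ for $1\leq i\leq p$, and $\underline{\omega}^2=-1$, a direct computation gives
$$D_{\underline{\omega}}f_{\underline{\omega}}=\left(D_{\bx_p}F_1-\partial_r F_2\right)+\underline{\omega}\left(\overline{D}_{\bx_p}F_2+\partial_r F_1\right).$$
The two brackets are exactly the left-hand sides of the Cauchy-Riemann system \eqref{C-R}. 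Hence if $f\in\mathcal{GSR}(\Omega_D)$ both brackets vanish identically, so $D_{\underline{\omega}}f_{\underline{\omega}}=0$ for every $\underline{\omega}$, proving the inclusion $\mathcal{GSR}(\Omega_D)\subseteq\mathcal{GSM}(\Omega)$ in both cases.

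For the equality in (ii), I would use that $\Omega=\Omega_D$ is a p-symmetric slice domain (as $\Omega\cap\R^{p+1}\neq\emptyset$), so the Representation Formula (Theorem \ref{Representation-Formula-SM}) applies to any $f\in\mathcal{GSM}(\Omega)$: it furnishes components $F_1,F_2$, independent of $\underline{\eta}$, with $f(\bx_p+r\underline{\omega})=F_1(\bx_p,r)+\underline{\omega}F_2(\bx_p,r)$, where $F_1$ is even and $F_2$ odd in $r$; thus $f$ is a generalized partial-slice function. The decisive point is that, since $F_1,F_2$ are now $\underline{\omega}$-independent, the two brackets $A:=D_{\bx_p}F_1-\partial_r F_2$ and $B:=\overline{D}_{\bx_p}F_2+\partial_r F_1$ in the identity above do not depend on $\underline{\omega}$, whereas $f\in\mathcal{GSM}(\Omega)$ forces $A+\underline{\omega}B=0$ for every $\underline{\omega}\in\mathbb{S}$. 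Choosing two distinct $\underline{\omega}_1,\underline{\omega}_2\in\mathbb{S}$ and subtracting yields $(\underline{\omega}_1-\underline{\omega}_2)B=0$; as $\underline{\omega}_1-\underline{\omega}_2$ is a nonzero $1$-vector it is invertible in $\mathbb{R}_{p+q}$, so $B=0$ and then $A=0$. Thus $F_1,F_2$ satisfy \eqref{C-R} and $f\in\mathcal{GSR}(\Omega_D)$, which together with the first step gives equality.

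For the strict inclusion in (i), the inclusion $\mathcal{GSR}(\Omega_D)\subseteq\mathcal{GSM}(\Omega)$ is already in hand, so it remains to exhibit $f\in\mathcal{GSM}(\Omega)\setminus\mathcal{GSR}(\Omega_D)$. Since $\Omega\cap\R^{p+1}=\emptyset$, one has $\underline{\bx}_q\neq0$ at every point of $\Omega$, so $\underline{\omega}=\underline{\bx}_q/|\underline{\bx}_q|$ is a well-defined smooth $\mathbb{S}$-valued function on $\Omega$; I would set $f(\bx):=h(\underline{\omega})$ for a suitable smooth $h:\mathbb{S}\to\mathbb{R}_{p+q}$. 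As $f$ is constant on each slice $\Omega_{\underline{\omega}}$, it is trivially monogenic there, so $f\in\mathcal{GSM}(\Omega)$. If instead $f$ lay in $\mathcal{GSR}(\Omega_D)$ it would be a generalized partial-slice function, forcing $h(\underline{\omega})=F_1+\underline{\omega}F_2$ with $F_1,F_2$ constant (since $f$ is independent of $\bx'$); evaluating this affine relation at enough points of $\mathbb{S}$ contradicts any non-affine choice, e.g. $h(\underline{\omega})$ equal to a fixed component of $\underline{\omega}$ (which needs $q\geq2$, precisely the regime in which a p-symmetric domain avoiding $\R^{p+1}$ can be connected). Hence $f\notin\mathcal{GSR}(\Omega_D)$ and the inclusion is strict. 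The main obstacle is exactly the deduction in (ii) that $A+\underline{\omega}B=0$ for all $\underline{\omega}$ forces $A=B=0$: this fails slice-by-slice and genuinely requires both the $\underline{\omega}$-independence of $F_1,F_2$ provided by the Representation Formula and the invertibility of nonzero $1$-vectors — which is also why equality must break down in case (i), where no such global decomposition exists.
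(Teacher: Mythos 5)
Your proof is correct, and where it can be compared with the paper it follows the same route: the paper does not actually prove this theorem but cites Theorem 4.5 of \cite{Xu-Sabadini}, remarking only that assertion (ii) ``follows from the Representation Formula,'' which is exactly your argument — the slice-wise identity $D_{\underline{\omega}}f_{\underline{\omega}}=(D_{\bx_p}F_1-\partial_r F_2)+\underline{\omega}(\overline{D}_{\bx_p}F_2+\partial_r F_1)$, the $\underline{\omega}$-independence of $F_1,F_2$ furnished by Theorem \ref{Representation-Formula-SM}, and the invertibility of nonzero $1$-vectors (the paper's own Lemma \ref{GS-monogenic-lemma} uses the equivalent trick of taking $\pm\underline{\omega}$). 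For (i), where the paper gives no details at all, your construction $f(\bx)=h\bigl(\underline{\bx}_q/|\underline{\bx}_q|\bigr)$ is valid: since $\Omega\cap\mathbb{R}^{p+1}=\emptyset$, each slice splits into two components on which $f$ is constant, so $f\in\mathcal{GSM}(\Omega)$, while the recovery formulas $F_1=\tfrac12\bigl(h(\underline{\omega})+h(-\underline{\omega})\bigr)$, $\underline{\omega}F_2=\tfrac12\bigl(h(\underline{\omega})-h(-\underline{\omega})\bigr)$ force $F_1,F_2$ to be constants and $h$ to be affine, which evaluation at $\underline{\omega}=\pm e_{p+1}$ and $\underline{\omega}=e_{p+2}$ contradicts for $h(\underline{\omega})=\omega_{p+1}$; your side remark that case (i) is vacuous when $q=1$ is also correct. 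The only step you gloss over is that in (ii) the hypothesis $\Omega\cap\mathbb{R}^{p+1}\neq\emptyset$ makes $\Omega$ a slice domain: one also needs each $\Omega_{\underline{\omega}}$ to be connected, which holds because $\Omega_{\underline{\omega}}$ is a copy of $D$ and the paper's standing convention takes $D$ to be a domain — worth one sentence, but not a gap in substance.
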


\section{The Fueter-Sce  theorem}

To establish a  version of the Fueter-Sce  theorem  for generalized partial-slice monogenic functions, we need some lemmas. First, we recall a useful property for generalized partial-slice functions \cite{Xu-Sabadini}.
\begin{proposition}\label{relation}
If $f(\bx)=F_1(\bx')+\underline{\omega} F_2(\bx')\in {\mathcal{GS}}^{1}(\Omega_{D}) $, then the following two formulas hold on $\Omega_{D}\setminus \mathbb{R}^{p+1}$:

(i) $(D_{\bx}-{D_{\underline{\omega}})}f(\bx) =(1-q)\frac{ F_2(\bx')}{r}$;

(ii) $\Gamma f(\bx)=(q-1)\underline{\omega}  F_2(\bx')$,\\
where $\Gamma:=-\sum_{p+1\leq i<j\leq p+q} e_{i}e_{j}L_{ij}$ denotes the spherical Dirac operator on $\mathbb{R}^{q}$ with the angular momentum operators
$ L_{ij}=x_{i} \frac{\partial}{\partial x_{j}}-x_{j} \frac{\partial}{\partial x_{i}}$.
\end{proposition}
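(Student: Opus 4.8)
The plan is to establish both identities by a direct computation on $\Omega_{D}\setminus \mathbb{R}^{p+1}$, where $r=|\underline{\bx}_q|>0$ so that $\underline{\omega}=\underline{\bx}_q/r$ is well defined and division by $r$ is legitimate. The two facts driving everything are the chain rule and the Clifford relations. Since $F_1,F_2$ depend on the last $q$ variables only through $r$, and $\partial_{x_i}r=x_i/r$ for $p+1\leq i\leq p+q$, one has $\partial_{x_i}F_k=(x_i/r)\partial_r F_k$. Writing $\underline{\omega}=r^{-1}\sum_{j=p+1}^{p+q}x_j e_j$ and differentiating gives the auxiliary formula $\partial_{x_i}\underline{\omega}=e_i/r-(x_i/r^2)\underline{\omega}$, which I would record first.

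For (i), I would note that $D_{\bx}-D_{\underline{\omega}}=D_{\underline{\bx}_q}-\underline{\omega}\partial_r=\sum_{i=p+1}^{p+q}e_i\partial_{x_i}-\underline{\omega}\partial_r$, so only the $\underline{\bx}_q$-part matters. Expanding $\partial_{x_i}f=\partial_{x_i}F_1+(\partial_{x_i}\underline{\omega})F_2+\underline{\omega}\,\partial_{x_i}F_2$ with the formulas above, then applying $\sum_i e_i$ and simplifying each term by means of $\sum_{i}x_i e_i=r\underline{\omega}$, $\sum_{i=p+1}^{p+q}e_i^2=-q$, and $\underline{\omega}^2=-1$, I expect the contributions of $\partial_r F_1$ and $\partial_r F_2$ to reassemble exactly into $\underline{\omega}\,\partial_r f=\underline{\omega}\,\partial_r F_1-\partial_r F_2$, while the two terms coming from $\partial_{x_i}\underline{\omega}$ produce the residual $\bigl(\tfrac{1}{r}-\tfrac{q}{r}\bigr)F_2=\tfrac{1-q}{r}F_2$. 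Subtracting $\underline{\omega}\,\partial_r f$ then yields $(D_{\bx}-D_{\underline{\omega}})f=(1-q)\tfrac{F_2}{r}$, which is (i).

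For (ii), the key observation is that each angular operator $L_{ij}=x_i\partial_{x_j}-x_j\partial_{x_i}$ annihilates $r$, hence $L_{ij}F_1=L_{ij}F_2=0$, leaving $\Gamma f=-\sum_{i<j}e_ie_j(L_{ij}\underline{\omega})F_2$. A short computation gives $L_{ij}\underline{\omega}=r^{-1}(x_ie_j-x_je_i)$, and the Clifford identity $e_ie_j(x_ie_j-x_je_i)=-x_ie_i-x_je_j$. Summing over all pairs $i<j$ and using that each index $k$ lies in exactly $q-1$ such pairs, the total collapses to $-(q-1)\sum_k x_k e_k=-(q-1)\,r\underline{\omega}$; dividing by $-r$ gives $\Gamma f=(q-1)\underline{\omega}F_2$. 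The main obstacle here is purely organizational: in (i) keeping the several contributions to $\partial_{x_i}f$ distinct and respecting the noncommutative order of the Clifford factors, and in (ii) the combinatorial count that each basis index appears in $q-1$ of the pairs $\{i,j\}$; the requirement $r>0$ is exactly what restricts both formulas to $\Omega_{D}\setminus\mathbb{R}^{p+1}$.
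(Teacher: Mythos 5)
Your proof is correct. Note that this paper does not actually prove Proposition \ref{relation}: it is recalled from the earlier work \cite{Xu-Sabadini}, so there is no internal proof to compare against. Your direct verification is the natural one and is consistent with everything the paper does later: your part (ii) computation, after observing $L_{ij}F_1=L_{ij}F_2=0$, is precisely a proof of the identity $\Gamma\underline{\omega}=(q-1)\underline{\omega}$, which the paper invokes without proof inside Lemma \ref{Fueter-Sce-Qian-lemma-1}; and your part (i) bookkeeping (terms $\underline{\omega}\partial_r F_1-\partial_r F_2$ reassembling into $\underline{\omega}\partial_r f$, with the residue $\bigl(\tfrac{1}{r}-\tfrac{q}{r}\bigr)F_2$ coming from $\partial_{x_i}\underline{\omega}=e_i/r-(x_i/r^2)\underline{\omega}$ via $\sum_i e_i^2=-q$ and $\underline{\omega}^2=-1$) is exactly right, including the noncommutative order. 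The only stylistic gap is that you state the reassembly in (i) as an expectation rather than writing out the four sums explicitly, but each identity you cite is correct and the terms do collapse as claimed, so the argument is complete.
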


As it is well-known monogenic functions of axial type namely, with the terminology we adopted, a slice function which is also monogenic, satisfy a Vekua-type system. In the present setting generalized partial-slice  functions which are monogenic satisfy a Vekua-type system which, in fact, characterises these functions in the sense described in the next result.

  \begin{lemma}\label{GS-monogenic-lemma}
Let $D \subseteq \mathbb{R}^{p+2}$  be a domain invariant under the reflection with respect the $(p+2)$-th variable, and let $f(\bx)=F_1(\bx')+\underline{\omega} F_2(\bx')  \in {\mathcal{GS}}^{1}(\Omega_{D}) \cap C^{1}(\Omega_{D})$, where $\bx=\bx_p+r\underline{\omega}\in \Omega_{D}$, $\bx'=(\bx_p, r)\in D$. Then $f \in \mathcal {M}(\Omega_{D}) $ if and only if  the components $F_1,F_2$ of
$f$ satisfy the system
 \begin{eqnarray}\label{GS-is-Monogenic}
 \left\{
\begin{array}{ll}
D_{\bx_p}  F_1(\bx')- \partial_{r} F_2(\bx')=\frac{q-1}{r}F_2(\bx'),
\\
 \overline{D}_{\bx_p}  F_2(\bx')+ \partial_{r} F_1(\bx')=0,
\end{array}\quad \quad \bx'\in  D \setminus \mathbb{R}^{p+1}.
\right.
\end{eqnarray}
\end{lemma}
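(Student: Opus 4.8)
The plan is to compute the action of the full Cauchy–Riemann operator $D_{\bx}$ on $f$ directly and to split the result into its scalar/paravector part and its bivector part, exploiting the decomposition $D_{\bx}=D_{\underline{\omega}}+(D_{\bx}-D_{\underline{\omega}})$ already isolated in Proposition \ref{relation}. Writing $f(\bx)=F_1(\bx')+\underline{\omega}F_2(\bx')$, I would start from the identity $D_{\bx}f=D_{\underline{\omega}}f+(D_{\bx}-D_{\underline{\omega}})f$ and substitute the second term using Proposition \ref{relation}(i), which gives $(D_{\bx}-D_{\underline{\omega}})f=(1-q)\frac{F_2(\bx')}{r}$ on $\Omega_D\setminus\R^{p+1}$. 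Hence $f$ is monogenic, i.e. $D_{\bx}f=0$, if and only if
$$
D_{\underline{\omega}}f(\bx)=(q-1)\frac{F_2(\bx')}{r}
$$
on $\Omega_D\setminus\R^{p+1}$.

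Next I would expand $D_{\underline{\omega}}f=(D_{\bx_p}+\underline{\omega}\,\partial_r)(F_1+\underline{\omega}F_2)$. Using $\underline{\omega}^2=-1$ and the fact that $\underline{\omega}$ is a fixed $1$-vector in $\R^q$ commuting appropriately with the $\R^{p+1}$ derivatives, the terms group as
$$
D_{\underline{\omega}}f
=\bigl(D_{\bx_p}F_1-\partial_r F_2\bigr)
+\underline{\omega}\bigl(\overline{D}_{\bx_p}F_2+\partial_r F_1\bigr),
$$
where the appearance of the conjugate $\overline{D}_{\bx_p}$ on $F_2$ comes from moving the scalar operator $D_{\bx_p}$ past the factor $\underline{\omega}$ and then left-multiplying by the outer $\underline{\omega}$; this is exactly the pairing already visible in Definition \ref{definition-GSR}. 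I would verify this regrouping carefully, since the precise placement of $\underline{\omega}$ and the resulting conjugation is the one genuinely delicate algebraic point. The first grouped term is scalar-and-paravector valued in the $\R^{p+1}$ sense, while the second carries the outer factor $\underline{\omega}$.

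Finally I would compare this with the right-hand side $(q-1)\frac{F_2}{r}$, noting that $(q-1)\frac{F_2}{r}$ contains no outer $\underline{\omega}$ factor. Because $\{1,\underline{\omega}\}$ are $\R_{p+q}$-linearly independent (the splitting $\R^{p+q+1}=\R^{p+1}\oplus\underline{\omega}\R$ is genuine), I can match components: the coefficient of $\underline{\omega}$ must vanish, giving $\overline{D}_{\bx_p}F_2+\partial_r F_1=0$, and the remaining part must equal $(q-1)\frac{F_2}{r}$, giving $D_{\bx_p}F_1-\partial_r F_2=\frac{q-1}{r}F_2$. This is precisely system \eqref{GS-is-Monogenic}. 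The main obstacle I anticipate is bookkeeping in the regrouping step: one must confirm that $D_{\bx_p}(\underline{\omega}F_2)=\underline{\omega}\,\overline{D}_{\bx_p}F_2$ after the outer multiplication by $\underline{\omega}$, i.e. that the sign flips induced by $e_i\underline{\omega}=-\underline{\omega}e_i$ for $0\le i\le p$ combine to turn $D_{\bx_p}$ into $\overline{D}_{\bx_p}$, and that the component-matching is justified by the linear independence of $1$ and $\underline{\omega}$ uniformly in $\underline{\omega}\in\mathbb{S}$.
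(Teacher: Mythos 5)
Your route is essentially the paper's: both arguments start from Proposition \ref{relation}(i), which yields the identity
$D_{\bx}f=\bigl(D_{\bx_p}F_1-\partial_r F_2-\tfrac{q-1}{r}F_2\bigr)+\underline{\omega}\bigl(\overline{D}_{\bx_p}F_2+\partial_r F_1\bigr)$
on $\Omega_D\setminus\mathbb{R}^{p+1}$, and your algebraic regrouping is correct, including $D_{\bx_p}(\underline{\omega}F_2)=\underline{\omega}\,\overline{D}_{\bx_p}F_2$ (note only that $e_i\underline{\omega}=-\underline{\omega}e_i$ holds for $1\le i\le p$, while $e_0=1$ commutes; the two facts together are what produce the conjugation). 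However, the justification you give for the decisive splitting step is wrong. You separate the displayed identity into two equations by invoking ``$\mathbb{R}_{p+q}$-linear independence of $\{1,\underline{\omega}\}$'', and for Clifford-algebra-valued coefficients this is false: taking $A=1$ and $B=\underline{\omega}$ gives $A+\underline{\omega}B=1+\underline{\omega}^2=0$ with $A,B\neq 0$. Since $F_1,F_2$ (hence both bracketed expressions) are $\mathbb{R}_{p+q}$-valued, you cannot match components pointwise at a fixed $\underline{\omega}$. What actually makes the splitting legitimate --- and what the paper uses --- is that the two brackets depend only on $\bx'=(\bx_p,r)$, while the identity holds for \emph{every} $\underline{\omega}\in\mathbb{S}$ because $\Omega_D$ is p-symmetric: evaluating at $\underline{\omega}$ and at $-\underline{\omega}$ (the point $\bx_p+r(-\underline{\omega})$ also lies in $\Omega_D$) and adding/subtracting shows that each bracket vanishes, using that $\underline{\omega}$ is invertible in $\mathbb{R}_{p+q}$. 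Your closing phrase ``uniformly in $\underline{\omega}\in\mathbb{S}$'' gestures at this, but the argument as written rests on a false algebraic principle and must be replaced by this variation-in-$\underline{\omega}$ argument.

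There is a second, smaller omission in the converse direction. Your chain of equivalences only yields $D_{\bx}f=0$ on $\Omega_D\setminus\mathbb{R}^{p+1}$, whereas $f\in\mathcal{M}(\Omega_D)$ requires the equation on all of $\Omega_D$, including the points with $r=0$ when $\Omega_D\cap\mathbb{R}^{p+1}\neq\emptyset$. The paper closes this by observing that $\Omega_D\setminus\mathbb{R}^{p+1}$ is dense in $\Omega_D$ and that $D_{\bx}f$ is continuous since $f\in C^1(\Omega_D)$, so the vanishing extends to the whole domain. You should add this density-plus-continuity step; without it the ``if'' half of the lemma is not established at the points of $\Omega_D\cap\mathbb{R}^{p+1}$.
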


\begin{proof}
Let $f(\bx)=f(\bx_p+r\underline{\omega})=F_1(\bx')+\underline{\omega} F_2(\bx')  \in {\mathcal{GS}}^{1}(\Omega_{D})\cap C^{1}(\Omega_{D})$ and assume that $f\in\mathcal{M}(\Omega_D)$.
From  Proposition \ref{relation}, we deduce that  $f$ satisfies
\begin{eqnarray}\label{relation-slice-and-slice-monogenic}
D_{\bx} f(\bx) =D_{\bx_p}  F_1(\bx')- \partial_{r} F_2(\bx')+\frac{1-q}{r}F_2(\bx')+\underline{\omega}( \overline{D}_{\bx_p}  F_2(\bx')+ \partial_{r} F_1(\bx'))=0
\end{eqnarray}
 for all $\bx\in\Omega_{D}\setminus \mathbb{R}^{p+1}$. By the arbitrariness of $\underline{\omega}\in\mathbb{S}$ (for example by taking $\pm\underline{\omega}$) we deduce that the system \eqref{GS-is-Monogenic} is satisfied.

 Conversely, let us assume that the components  $F_1,F_2$ of $f(\bx)=F_1(\bx')+\underline{\omega} F_2(\bx')  \in {\mathcal{GS}}^{1}(\Omega_{D})\cap C^{1}(\Omega_{D})$, where $\bx=\bx_p+r\underline{\omega}$, $\bx'=(\bx_p, r)$, satisfy  the system (\ref{GS-is-Monogenic}). Then (\ref{relation-slice-and-slice-monogenic}) gives $D_{\bx} f=0$ on $\Omega_{D}\setminus \mathbb{R}^{p+1}$.  To conclude we observe that $\Omega_{D}\setminus \mathbb{R}^{p+1}$ is dense in $\Omega_D$ and since $f\in C^1(\Omega_D)$, we deduce that the condition $D_{\bx} f=0$ holds on the whole $\Omega_{D}$ and the conclusion follows.

\end{proof}

  \begin{remark}
Let $D \subseteq \mathbb{R}^{p+2}$  be a domain invariant under the reflection  with respect to the $(p+2)$-th variable, and let $f(\bx)=F_1(\bx')+\underline{\omega} F_2(\bx')  \in {\mathcal{GS}}^{1}(\Omega_{D})$. We note that $f \in \mathcal {M}(\Omega_{D}) $  implies that for all $\bx'=(\bx_{p},0)\in  D$,
 \begin{eqnarray}\label{GS-is-Monogenic-0}
 \left\{
\begin{array}{ll}
D_{\bx_p}  F_1(\bx')- q\partial_{r} F_2(\bx')=0,
\\
 \overline{D}_{\bx_p}  F_2(\bx')+ \partial_{r} F_1(\bx')=0.
\end{array}
\right.
\end{eqnarray}
In fact, recalling Remark \ref{remodd} and the assumption that $f\in {\mathcal{GS}}^{1}(\Omega_{D})$, and so in particular $F_2\in C^{1}({D})$, we have
$$\lim_{r\rightarrow0}\frac{1}{r}F_2(\bx_{p},r)=\lim_{r\rightarrow0}\frac{1}{r}(F_2(\bx_{p},r)-F_2(\bx_{p},0))=\partial_{r} F_2(\bx_{p},0),\quad  (\bx_{p},0)\in  D. $$
Thus taking the limit for $r\to 0$ in (\ref{relation-slice-and-slice-monogenic}),  we infer that
$$D_{\bx} f(\bx_{p}) =D_{\bx_p}  F_1(\bx_{p},0)-q \partial_{r} F_2(\bx_{p},0)  +\underline{\omega}( \overline{D}_{\bx_p}  F_2(\bx_{p},0)+ \partial_{r} F_1(\bx_{p},0)),$$
 and the statement follows.
\end{remark}

In the results below we shall use following notations for the Laplace operators in $\mathbb{R}^{p+q+1}$ and $\mathbb{R}^{p+1}$, respectively,
$$\Delta_{\bx}=\sum_{i=0}^{p+q} \partial_{x_i}^{2}, \qquad \quad \Delta_{\bx_p}=\sum_{i=0}^{p} \partial_{x_i}^{2}.$$
\begin{lemma}  \label{Fueter-Sce-Qian-lemma-1}
Let    $f(\bx)=F_1(\bx')+\underline{\omega} F_2(\bx') \in {\mathcal{GS}}^{2}(\Omega_{D})\cap C^{2}(\Omega_{D})$. Then it holds that for all $\bx\in\Omega_{D}$
\begin{eqnarray}\label{relation-harm}
\Delta_{\bx} f(\bx)= \Delta_{\bx'}F_1(\bx')+ \underline{\omega} \Delta_{\bx'}F_2(\bx')+ (q-1) ( (\frac{1}{r}\partial_{r}) F_1(\bx')+\underline{\omega} (\partial_{r}\frac{1}{r}) F_2(\bx') ),
\end{eqnarray}
where  $\Delta_{\bx'}=\Delta_{\bx_p}+\partial_{r}^{2}$ and the values of $(\frac{1}{r}\partial_{r}) F_1$, $(\partial_{r}\frac{1}{r}) F_2$ at  $\bx'=(\bx_{p},0)\in  D$   are  meant as
\begin{eqnarray}\label{relation-harm-000}
\lim_{r\rightarrow0} (\frac{1}{r}\partial_{r}) F_1(\bx')=\partial_{r}^{2}F_1(\bx_{p},0),
\end{eqnarray}
and
\begin{eqnarray}\label{relation-harm-0001}
\lim_{r\rightarrow0}(\partial_{r}\frac{1}{r}) F_2(\bx')=\frac{1}{2}\partial_{r}^{2}F_2(\bx_{p},0),
\end{eqnarray}
respectively.
\end{lemma}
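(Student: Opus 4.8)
The plan is to exploit the orthogonal splitting of the Laplacian induced by the decomposition $\bx=\bx_p+\underline{\bx}_q$, namely
$$\Delta_{\bx}=\Delta_{\bx_p}+\Delta_{\underline{\bx}_q},\qquad \Delta_{\underline{\bx}_q}:=\sum_{i=p+1}^{p+q}\partial_{x_i}^{2},$$
and to handle the two pieces separately. Since $\underline{\omega}=\underline{\bx}_q/r$ does not depend on $\bx_p$, the first piece is immediate: $\Delta_{\bx_p}f=\Delta_{\bx_p}F_1+\underline{\omega}\,\Delta_{\bx_p}F_2$. All the real work sits in $\Delta_{\underline{\bx}_q}$, which I would first compute on the dense open subset $\Omega_D\setminus\mathbb{R}^{p+1}$, where $r>0$ and spherical coordinates in $\mathbb{R}^q$ are available.

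On that set the component $F_1$ depends on $\underline{\bx}_q$ only through $r=|\underline{\bx}_q|$, so it is radial and $\Delta_{\underline{\bx}_q}F_1=\partial_r^{2}F_1+\frac{q-1}{r}\partial_rF_1$. For the second component I would write $\underline{\omega}F_2=\underline{\bx}_q\,\frac{F_2}{r}$ and use that the Clifford vector $\underline{\bx}_q=\sum_{i=p+1}^{p+q}x_ie_i$ is componentwise harmonic and homogeneous of degree $1$ in the $\underline{\bx}_q$-variables. Applying the scalar product rule $\Delta_{\underline{\bx}_q}(uv)=(\Delta_{\underline{\bx}_q}u)v+2\sum_{j}(\partial_{x_j}u)(\partial_{x_j}v)+u\,\Delta_{\underline{\bx}_q}v$ with $u=\underline{\bx}_q$ (so $\Delta_{\underline{\bx}_q}u=0$) and $v=F_2/r$, the cross term reduces, via Euler's relation $\sum_j(\partial_{x_j}\underline{\bx}_q)\frac{x_j}{r}=\underline{\omega}$, to a clean multiple of $\underline{\omega}$, and one obtains
$$\Delta_{\underline{\bx}_q}(\underline{\omega}F_2)=\underline{\omega}\Big(\partial_r^{2}F_2+(q-1)\big(\tfrac{1}{r}\partial_rF_2-\tfrac{1}{r^{2}}F_2\big)\Big).$$
Adding the four contributions and recognizing $\Delta_{\bx'}=\Delta_{\bx_p}+\partial_r^{2}$ together with the operators $\frac1r\partial_r$ and $\partial_r\frac1r$ yields exactly \eqref{relation-harm} for every $\bx\in\Omega_D\setminus\mathbb{R}^{p+1}$.

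It then remains to pass to the points with $r=0$. The left-hand side $\Delta_{\bx}f$ is continuous on all of $\Omega_D$ because $f\in C^{2}(\Omega_D)$, and the terms $\Delta_{\bx'}F_1,\ \underline{\omega}\Delta_{\bx'}F_2$ are continuous since $F_1,F_2\in C^{2}(D)$; so I only need the apparently singular terms $(\frac1r\partial_r)F_1$ and $(\partial_r\frac1r)F_2$ to have removable singularities with the stated limits. Here I would invoke the parity relations \eqref{even-odd}. Since $F_1$ is even in $r$, one has $\partial_rF_1(\bx_p,0)=0$, so $\frac{\partial_rF_1(\bx_p,r)}{r}=\frac{\partial_rF_1(\bx_p,r)-\partial_rF_1(\bx_p,0)}{r}\to\partial_r^{2}F_1(\bx_p,0)$ directly from the definition of the derivative, giving \eqref{relation-harm-000}. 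Since $F_2$ is odd, $F_2(\bx_p,0)=0$ by Remark \ref{remodd}; writing $(\partial_r\frac1r)F_2=\frac{r\partial_rF_2-F_2}{r^{2}}$, whose numerator vanishes at $r=0$, one L'Hôpital step followed by the simplification $\partial_r\!\big(r\partial_rF_2-F_2\big)=r\partial_r^{2}F_2$ gives the limit $\tfrac12\partial_r^{2}F_2(\bx_p,0)$, which is \eqref{relation-harm-0001}. Both sides of \eqref{relation-harm} are therefore continuous on $\Omega_D$ and agree on the dense set $\{r>0\}$, hence agree everywhere.

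The step I expect to be the main obstacle is the correct evaluation of $\Delta_{\underline{\bx}_q}(\underline{\omega}F_2)$, in particular keeping track of the cross term and the resulting $-\frac{q-1}{r^{2}}F_2$ contribution while respecting the Clifford ordering (the factor $\underline{\omega}$ stays on the left throughout, because $\underline{\bx}_q$ is the leftmost factor in $\underline{\bx}_q\,\frac{F_2}{r}$). The harmonic and degree-$1$ homogeneous structure of $\underline{\bx}_q$ is exactly what makes this transparent: it annihilates the $\Delta_{\underline{\bx}_q}u$ term and collapses the cross term to $2\underline{\omega}\,\partial_r(F_2/r)$. By contrast, the passage to $r=0$ is routine once the parity of $F_1,F_2$ and the $C^{2}$-regularity are used.
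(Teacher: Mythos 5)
Your proof is correct, but the central computation takes a genuinely different route from the paper's. The paper also splits $\Delta_{\bx}=\Delta_{\bx_p}+\Delta_{\underline{\bx}_q}$, works first on $\Omega_D\setminus\mathbb{R}^{p+1}$, and concludes at $r=0$ by continuity; however, it evaluates $\Delta_{\underline{\bx}_q}f$ by passing to spherical coordinates in $\mathbb{R}^q$, writing $\Delta_{\underline{\bx}_q}=\partial_r^2+\frac{q-1}{r}\partial_r+\frac{1}{r^2}\Delta_{\underline{\omega}}$ and expressing the Laplace--Beltrami operator through the spherical Dirac operator as $\Delta_{\underline{\omega}}=(q-2)\Gamma-\Gamma^2$; the key inputs are $\Gamma F_1=\Gamma F_2=0$ and $\Gamma\underline{\omega}=(q-1)\underline{\omega}$ (the same mechanism as in Proposition \ref{relation}), which give $\Delta_{\underline{\omega}}f=(1-q)\underline{\omega}F_2$ and hence the intermediate identity \eqref{Delta-f-F}. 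You avoid this machinery entirely: you treat $F_1$ as a radial function in $\underline{\bx}_q$, and you compute $\Delta_{\underline{\bx}_q}\bigl(\underline{\bx}_q\frac{F_2}{r}\bigr)$ by the Leibniz rule for the scalar operator $\Delta_{\underline{\bx}_q}$ (valid for the noncommutative product since factor order is preserved), exploiting that $\underline{\bx}_q$ is componentwise linear, hence harmonic, so only the cross term $2\underline{\omega}\,\partial_r(F_2/r)$ and the radial term survive; your resulting formula $\underline{\omega}\bigl(\partial_r^2F_2+(q-1)(\partial_r\frac1r)F_2\bigr)$ checks out, and the assembled identity agrees with \eqref{relation-harm}. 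What each approach buys: yours is more elementary and self-contained, requiring only the radial Laplacian and the product rule; the paper's pays the overhead of the $\Gamma$-calculus but the by-product \eqref{Delta-f-F} is reused verbatim in the inductive step of Lemma \ref{Fueter-Sce-Qian-lemma-k}, so the spherical route amortizes over the whole section. Your handling of the $r\to0$ limits (parity of $F_1,F_2$, a difference quotient for \eqref{relation-harm-000}, one L'H\^opital step for \eqref{relation-harm-0001}) is essentially the paper's argument made explicit, and both proofs share the same final density/continuity step; there, like the paper, you implicitly use that the coefficients multiplying $\underline{\omega}$ vanish at $r=0$ (by the oddness in \eqref{even-odd}), so that the right-hand side is unambiguous and continuous despite $\underline{\omega}$ being undefined on $\mathbb{R}^{p+1}$.
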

\begin{proof}Let    $f(\bx)=F_1(\bx')+\underline{\omega} F_2(\bx') \in {\mathcal{GS}}^{2}(\Omega_{D})\cap C^{2}(\Omega_{D})$.
First, we point out that   the existence of limits in  (\ref{relation-harm-000}) and (\ref{relation-harm-0001}) depends heavily on  that $(F_1,F_2)$  is an even-odd pair in the $(p+2)$-th variable and then the two limits can be obtained with elementary arguments, e.g. with L'Hospital's rule in view of the fact that $F_1,F_2 \in  C^{2}(D)$.

The next step will prove (\ref{relation-harm}) for  $\bx\in \Omega_{D}\setminus \mathbb{R}^{p+1}$ so that the case  $\bx=(\bx_{p},0)\in\Omega_{D}$  follows immediately by  the  continuity. To this end, consider the decomposition
$$\Delta_{\bx}=\Delta_{\bx_p}+\Delta_{\underline{\bx}_q}, \quad \Delta_{\bx_p}=\sum_{i=0}^{p} \partial_{x_i}^{2}, \ \Delta_{\underline{\bx}_q}=\sum_{i=p+1}^{p+q} \partial_{x_i}^{2},$$
and recall that
$$\Delta_{\underline{\bx}_q}=\partial_{r}^{2}+\frac{q-1}{r}\partial_{r}+\frac{1}{r^{2}} \Delta_{\underline{\omega}},$$
where $\Delta_{\underline{\omega}}$ is (up to minus sign)  the Laplace-Beltrami operator on  $\mathbb{S}$ given by
$$\Delta_{\underline{\omega}} = (q-2)\Gamma -\Gamma ^{2}.$$
From the basic formulas
 $$L_{ij}F_2(\bx')=0,\ \Gamma F_1(\bx') =\Gamma F_2(\bx')=0,\ \Gamma \underline{\omega}=(q-1) \underline{\omega},$$
$$\Delta_{\underline{\omega}} \underline{\omega}= (q-2)(q-1) \underline{\omega}  -(q-1)\Gamma ^{2} \underline{\omega}=(1-q)  \underline{\omega}, $$
we have
$$\Delta_{\underline{\omega}}f(\bx) =(1-q)  \underline{\omega}F_2(\bx'),$$
and then
\begin{eqnarray}\label{Delta-f-F}
&&(\frac{q-1}{r}\partial_{r}+\frac{1}{r^{2}} \Delta_{\underline{\omega}})f(\bx)\notag
\\
 &=&\frac{q-1}{r}(\partial_{r}F_1(\bx') + \underline{\omega} \partial_{r}F_2(\bx'))+\frac{1}{r^{2}}(1-q)  \underline{\omega}F_2(\bx')\notag
 \\
&=& (q-1) ( \frac{1}{r}\partial_{r} F_1(\bx') +\underline{\omega} \partial_{r}  \frac{F_2(\bx')}{r} ).
\end{eqnarray}
Hence,
\begin{eqnarray*}
\Delta_{\bx}f(\bx)
 &=&\Delta_{\bx_{p}}f(\bx)+(\partial_{r}^{2}+\frac{q-1}{r}\partial_{r}+\frac{1}{r^{2}} \Delta_{\underline{\omega}})f(\bx)
 \\
&=&(\Delta_{\bx_{p}}+\partial_{r}^{2})f(\bx)+(\frac{q-1}{r}\partial_{r}+\frac{1}{r^{2}} \Delta_{\underline{\omega}})f(\bx)
\\ &=&\Delta_{\bx'}F_1(\bx')+ \underline{\omega} \Delta_{\bx'}F_2(\bx')+ (q-1) ( (\frac{1}{r}\partial_{r}) F_1(\bx')+\underline{\omega} (\partial_{r}\frac{1}{r}) F_2(\bx') ),
\end{eqnarray*}
as asserted.
\end{proof}

Before  giving the next lemma, we need to point out the important  fact   that all functions in $\mathcal{GSR}(\Omega_{D})$ are necessarily  real analytic.
 \begin{proposition}\label{slice-monogenic-real-analytic}
Let $f(\bx)=F_1(\bx')+\underline{\omega} F_2(\bx') \in {\mathcal{GSR}}(\Omega_{D})$. Then $F_1, F_2$  are   real analytic on $D$ and $f$ is real analytic on $\Omega_{D}$.
\end{proposition}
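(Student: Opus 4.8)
The plan is to reduce the statement to the classical fact that monogenic functions are real analytic, by exploiting that $f$ restricts, slice by slice, to a monogenic function on a $(p+2)$-dimensional paravector space. First I would fix an arbitrary $\underline{\omega}\in\mathbb{S}$ and view $f_{\underline{\omega}}=F_1+\underline{\omega}F_2$ as a function of $\bx'=(\bx_p,r)$ on the domain $D\subseteq\mathbb{R}^{p+2}$. Since $\underline{\omega}^2=-1$, while $\underline{\omega}$ commutes with $e_0$ and anticommutes with $e_1,\dots,e_p$, a direct computation gives
\[
(D_{\bx_p}+\underline{\omega}\partial_r)f_{\underline{\omega}}=(D_{\bx_p}F_1-\partial_r F_2)+\underline{\omega}(\overline{D}_{\bx_p}F_2+\partial_r F_1),
\]
so the defining system \eqref{C-R} of $\mathcal{GSR}(\Omega_D)$ is precisely equivalent to $D_{\underline{\omega}}f_{\underline{\omega}}=0$ on $D$.

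Next I would note that the same relations $\underline{\omega}^2=-1$ and $e_i\underline{\omega}=-\underline{\omega}e_i$ ($1\le i\le p$) show that $\{e_1,\dots,e_p,\underline{\omega}\}$ satisfy the defining relations of the generators of $\mathbb{R}_{p+1}$. Consequently $\bx'\mapsto\bx_p+r\underline{\omega}$ identifies $\mathbb{R}^{p+2}$ with the paravector space of this Clifford subalgebra, and $D_{\underline{\omega}}=D_{\bx_p}+\underline{\omega}\partial_r$ is exactly the corresponding generalized Cauchy-Riemann operator; it is elliptic and satisfies $D_{\underline{\omega}}\overline{D}_{\underline{\omega}}=\Delta_{\bx'}$. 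Hence $f_{\underline{\omega}}\in C^1(D)$ is monogenic in the classical sense, and by the standard theory (e.g. the Cauchy integral representation in \cite{Brackx}) it is real analytic on $D$. Taking the even and odd parts in $r$ of the real-analytic function $f_{\underline{\omega}}$, namely $\tfrac{1}{2}(f_{\underline{\omega}}(\bx_p,r)\pm f_{\underline{\omega}}(\bx_p,-r))$, and using the parity \eqref{even-odd}, these parts equal $F_1(\bx')$ and $\underline{\omega}F_2(\bx')$ respectively; therefore $F_1$ and $F_2$ are real analytic on $D$.

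It remains to transfer analyticity from $F_1,F_2$ on $D$ to $f$ on $\Omega_D$, the delicate point being the locus $\mathbb{R}^{p+1}$ where $\underline{\bx}_q=0$ and the assignment $\bx\mapsto(\bx_p,r,\underline{\omega})$ degenerates. Here I would again invoke the parity \eqref{even-odd}: a real-analytic even function of $r$ has the form $F_1(\bx_p,r)=G_1(\bx_p,r^2)$ and a real-analytic odd function the form $F_2(\bx_p,r)=rH(\bx_p,r^2)$, with $G_1,H$ real analytic. Substituting $r=|\underline{\bx}_q|$, $\underline{\omega}=\underline{\bx}_q/|\underline{\bx}_q|$ and $r^2=\sum_{i=p+1}^{p+q}x_i^2$ yields
\[
f(\bx)=G_1\!\Big(\bx_p,\textstyle\sum_{i=p+1}^{p+q}x_i^2\Big)+\underline{\bx}_q\,H\!\Big(\bx_p,\textstyle\sum_{i=p+1}^{p+q}x_i^2\Big),
\]
where the inner argument and the factor $\underline{\bx}_q$ are polynomials in the coordinates of $\bx$; composing real-analytic functions with polynomials then shows $f$ is real analytic on all of $\Omega_D$, across $\mathbb{R}^{p+1}$ included.

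The main obstacle is the regularity jump concealed in the second step: the hypothesis only grants $f\in{\mathcal{GS}}^1(\Omega_D)$, i.e. $F_1,F_2\in C^1(D)$, while the conclusion is real analyticity. This gap is bridged by the classical theorem that every $C^1$ solution of a generalized Cauchy-Riemann (Dirac) system is automatically real analytic, a consequence of ellipticity together with the Cauchy kernel representation. A secondary point requiring care is the reduction to the variable $r^2$: one must check that $G_1,H$ are genuinely (jointly) real analytic, which follows by reorganizing the locally convergent Taylor series of $F_1,F_2$ in powers of $r$, where only even, respectively odd, powers appear.
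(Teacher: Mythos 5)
Your proposal is correct and follows essentially the same route as the paper: first establish real analyticity of $F_1,F_2$ on $D$, then use the even/odd parity in $r$ to factor $F_1(\bx_p,r)=G_1(\bx_p,r^2)$ and $F_2(\bx_p,r)=rG_2(\bx_p,r^2)$ with $G_1,G_2$ real analytic, and finally substitute $r^2=\sum_{i=p+1}^{p+q}x_i^2$ to get analyticity of $f$ on all of $\Omega_D$, including across $\mathbb{R}^{p+1}$. The only difference is that you re-derive the two key ingredients (slice-wise reduction to classical monogenic regularity via the subalgebra generated by $e_1,\dots,e_p,\underline{\omega}$, and the power-series reorganization for the $r^2$ factorization), whereas the paper simply cites the Cauchy--Pompeiu formula of \cite{Xu-Sabadini} for the first and Whitney's theorems \cite{Whitney} for the second.
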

\begin{proof}Let $f(\bx)=F_1(\bx')+\underline{\omega} F_2(\bx') \in {\mathcal{GSR}}(\Omega_{D})$.
From the  Cauchy-Pompeiu formula \cite[Theorem 4.3]{Xu-Sabadini},  $F_1, F_2$  are   real analytic on $D$.
Recalling that $(F_1,F_2)$  is an even-odd pair in the $(p+2)$-th variable, from \cite[Theorem 1]{Whitney} and \cite[Theorem 2]{Whitney}, there exist real analytic functions $G_{1}, G_{2}$, respectively, such that
$$F_1(\bx_{p}, r)=G_{1}(\bx_{p}, r^{2}), \ F_2(\bx_{p}, r)=rG_{2}(\bx_{p}, r^{2}), \quad (\bx_{p}, r)\in D.$$
Then $$f(\bx)=f(\bx_p+\underline{\bx}_q)=G_{1}(\bx_{p}, r^{2})+ \underline{\bx}_q G_{2}(\bx_{p}, r^{2}), \quad r=|\underline{\bx}_q|.$$
Hence, $f$   is real analytic on $\Omega_{D}$. The proof is complete.
\end{proof}

 \begin{remark}\label{0-lim}
 Let $f(\bx)=F_1(\bx')+\underline{\omega} F_2(\bx') \in {\mathcal{GSR}}(\Omega_{D})$. Then by Proposition \ref{slice-monogenic-real-analytic}, we see that for $k \in \mathbb{N}$,
 \begin{equation}\label{r}
(\frac{1}{r}\partial_{r})^{k} F_1(\bx'), \ (\partial_{r}\frac{1}{r})^{k}F_2(\bx')
\end{equation}
 are well-posed for all $\bx'=(\bx_{p},r) \in  D$ with $r\neq0$.
  Reasoning as in the proof of Lemma \ref{Fueter-Sce-Qian-lemma-1}, we deduce that the following two limits exist
   \begin{equation}\label{r-lim}
\lim_{r\rightarrow0} (\frac{1}{r}\partial_{r})^{k} F_1(\bx'), \qquad \lim_{r\rightarrow0}(\partial_{r}\frac{1}{r})^{k}F_2(\bx').
\end{equation}
{\bf Assumption.} For the sake of simplicity, in Lemma \ref{Fueter-Sce-Qian-lemma-k}, and in Theorems \ref{Fueter-theorem}, \ref{Fueter-theorem-GSM}, \ref{CK-Fueter-relation} below,  we will use the convention that the values of  functions in (\ref{r}) at $(\bx_{p},0)\in  D$  are  meant as limits in (\ref{r-lim}), respectively.
  \end{remark}

\begin{lemma}  \label{Fueter-Sce-Qian-lemma-k}
Let $f(\bx)=F_1(\bx')+\underline{\omega} F_2(\bx') \in  \mathcal{GSR} (\Omega_{D})$. Then, for all $k \in \mathbb{N}\setminus\{0\}$,
\begin{equation}\label{k}
 \Delta_{\bx}^{k}f(\bx)= C_{q}(k)( (\frac{1}{r}\partial_{r})^{k}F_1(\bx')+\underline{\omega} (\partial_{r}\frac{1}{r})^{k}F_2(\bx') ),
\end{equation}
where $C_{q}(k)=(q-1)(q-3)\cdots(q-2k+1)$.
\end{lemma}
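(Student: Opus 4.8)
The plan is to argue by induction on $k$, peeling off one power of $\Delta_{\bx}$ at each step by means of Lemma \ref{Fueter-Sce-Qian-lemma-1}, and feeding in two ingredients: first, that the components $F_1,F_2$ are harmonic (a consequence of being in $\mathcal{GSR}$); second, a commutation identity for the radial operators $\frac1r\partial_r$ and $\partial_r\frac1r$ with the Laplacian $\Delta_{\bx'}$. The arithmetic of the constants $C_q(k)$ will then fall out of the recursion $C_q(k+1)=C_q(k)(q-2k-1)$.

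First I would settle the base case together with the harmonicity. Since $f\in\mathcal{GSR}(\Omega_D)$, its components satisfy the Vekua system \eqref{C-R}, i.e. $D_{\bx_p}F_1=\partial_r F_2$ and $\overline{D}_{\bx_p}F_2=-\partial_r F_1$. Applying $\overline{D}_{\bx_p}$ to the first equation and $D_{\bx_p}$ to the second, using that $D_{\bx_p}$ and $\overline{D}_{\bx_p}$ commute with $\partial_r$, and invoking $\overline{D}_{\bx_p}D_{\bx_p}=D_{\bx_p}\overline{D}_{\bx_p}=\Delta_{\bx_p}$, I obtain $\Delta_{\bx'}F_1=\Delta_{\bx'}F_2=0$; all these manipulations are licit because $F_1,F_2$ are real analytic by Proposition \ref{slice-monogenic-real-analytic}. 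Substituting harmonicity into Lemma \ref{Fueter-Sce-Qian-lemma-1} leaves exactly $\Delta_{\bx}f=(q-1)\big(\tfrac1r\partial_r F_1+\underline{\omega}\,\partial_r\tfrac1r F_2\big)$, which is \eqref{k} for $k=1$ since $C_q(1)=q-1$.

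The heart of the matter, and the step I expect to be the main obstacle, is the auxiliary identity
\[
\Delta_{\bx'}\Big(\tfrac1r\partial_r\Big)^k F_1=-2k\Big(\tfrac1r\partial_r\Big)^{k+1}F_1,\qquad
\Delta_{\bx'}\Big(\partial_r\tfrac1r\Big)^k F_2=-2k\Big(\partial_r\tfrac1r\Big)^{k+1}F_2,
\]
valid because $F_1$ is even and harmonic and $F_2$ is odd and harmonic. The cleanest route is the substitution $s=r^2$, under which $\tfrac1r\partial_r=2\partial_s$ and $\partial_r^2=4s\partial_s^2+2\partial_s$, so that $\Delta_{\bx'}=\Delta_{\bx_p}+4s\partial_s^2+2\partial_s$. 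Writing $F_1=\Phi_1(\bx_p,s)$ and $F_2=r\,\Phi_2(\bx_p,s)$ turns harmonicity into a linear relation expressing $\Delta_{\bx_p}\Phi_i$ in terms of $s$-derivatives, and both identities then reduce to the elementary Leibniz rule $\partial_s^k(s\,\psi)=s\,\partial_s^k\psi+k\,\partial_s^{k-1}\psi$. The $F_2$ case is slightly more delicate owing to the operator ordering in $\partial_r\tfrac1r$; isolating the even factor $\Phi_2=\tfrac1r F_2$ first and using $(\partial_r\tfrac1r)^kF_2=r\,2^k\partial_s^k\Phi_2$ keeps the bookkeeping clean and produces the same factor $-2k$ in both cases.

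Finally I would assemble the induction. Assuming \eqref{k} for $k$, set $g_k:=(\tfrac1r\partial_r)^kF_1+\underline{\omega}(\partial_r\tfrac1r)^kF_2$; its scalar component is even and its $\underline{\omega}$-component is odd, so $g_k$ is again a generalized partial-slice function, and it is smooth (real analytic in $(\bx_p,s)$), so Lemma \ref{Fueter-Sce-Qian-lemma-1} applies and gives $\Delta_{\bx}g_k=\Delta_{\bx'}G_1^{(k)}+\underline{\omega}\,\Delta_{\bx'}G_2^{(k)}+(q-1)\big(G_1^{(k+1)}+\underline{\omega}\,G_2^{(k+1)}\big)$, where $G_1^{(k)}=(\tfrac1r\partial_r)^kF_1$ and $G_2^{(k)}=(\partial_r\tfrac1r)^kF_2$ denote the two radial expressions. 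Inserting the auxiliary identities collapses the scalar and $\underline{\omega}$ parts to $(q-1-2k)G_1^{(k+1)}$ and $(q-1-2k)G_2^{(k+1)}$; multiplying by $C_q(k)$ and using $C_q(k+1)=C_q(k)(q-2k-1)$ yields \eqref{k} for $k+1$. The equality is first established on $\Omega_D\setminus\mathbb{R}^{p+1}$ and then extends to $r=0$ by continuity, equivalently by real analyticity in $s=r^2$, in accordance with the limit convention of Remark \ref{0-lim}.
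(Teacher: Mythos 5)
Your proof is correct, and it reaches the formula by a genuinely different route than the paper's for the key computational step. Both arguments share the same skeleton: induction on $k$, the base case via harmonicity of $F_1,F_2$ (derived from the system \eqref{C-R} exactly as you do) combined with Lemma \ref{Fueter-Sce-Qian-lemma-1}, the inductive assembly by applying Lemma \ref{Fueter-Sce-Qian-lemma-1} to $A_k+\underline{\omega}B_k$, and the extension to $r=0$ by continuity. The difference lies in how the crucial recursion $\Delta_{\bx'}A_k=-2k\,A_{k+1}$, $\Delta_{\bx'}B_k=-2k\,B_{k+1}$ is obtained. The paper proves, by a separate induction using the Clifford operators, that each pair $(A_k,B_k)$ satisfies the first-order Vekua-type system \eqref{inter-relation}, namely $D_{\bx_p}A_k-\partial_rB_k=\frac{2k}{r}B_k$ and $\overline{D}_{\bx_p}B_k+\partial_rA_k=0$, and then factors $\Delta_{\bx_p}=\overline{D}_{\bx_p}D_{\bx_p}$ to extract the second-order identities. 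You instead prove the second-order identities directly by the real-variable substitution $s=r^2$, writing $F_1=\Phi_1(\bx_p,s)$ and $F_2=r\,\Phi_2(\bx_p,s)$ (legitimate: the proof of Proposition \ref{slice-monogenic-real-analytic} supplies exactly this Whitney-type representation with $\Phi_i$ real analytic), so that everything reduces to the Leibniz rule $\partial_s^k(s\psi)=s\,\partial_s^k\psi+k\,\partial_s^{k-1}\psi$; I checked the computation ($\Delta_{\bx'}$ acts as $\Delta_{\bx_p}+4s\partial_s^2+2\partial_s$ on even functions, while $\Delta_{\bx'}(r\Psi)=r\big(\Delta_{\bx_p}+4s\partial_s^2+6\partial_s\big)\Psi$ on odd ones, and harmonicity converts $\Delta_{\bx_p}\Phi_i$ into pure $s$-derivatives), and it does produce the factor $-2k$ in both cases. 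What your route buys is economy and transparency: only harmonicity and parity of the components are used, with no Clifford structure beyond Lemma \ref{Fueter-Sce-Qian-lemma-1}. What it loses is the system \eqref{inter-relation} itself: the paper reuses that system, with $k=\frac{q-1}{2}$, as the verification of the Vekua system \eqref{GS-is-Monogenic} required in the proof of Theorem \ref{Fueter-theorem}, so if Lemma \ref{Fueter-Sce-Qian-lemma-k} were proved your way, Theorem \ref{Fueter-theorem} would need a separate argument (for instance, rerunning your $s$-variable calculus on the first-order system) to show that the pair $(A,B)$ actually satisfies \eqref{GS-is-Monogenic}.
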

\begin{proof}  In view of Proposition \ref{slice-monogenic-real-analytic} and Remark \ref{0-lim}, both sides of  (\ref{k}) are well-defined.   As in the proof of Lemma \ref{Fueter-Sce-Qian-lemma-1},  we only prove (\ref{k}) for  $\bx\in \Omega_{D}\setminus \mathbb{R}^{p+1}$ since  the case of $\bx=(\bx_{p},0)\in\Omega_{D}$  follows immediately from the continuity. We follow  the reasoning of the proof of \cite[Lemma 1]{Qian-97} (see also \cite[Theorem 11.33]{Gurlebeck}),  and  use mathematical induction to prove (\ref{k}) for  $\bx\in \Omega_{D}\setminus \mathbb{R}^{p+1}$.

First,   observe that all generalized partial-slice monogenic functions are  necessarily slice-by-slice harmonic, that is to say $$f(\bx)=F_1(\bx')+\underline{\omega} F_2(\bx') \in {\mathcal{GSR}}(\Omega_{D}) \Rightarrow
 F_1(\bx') ,F_2(\bx') \in Ker \Delta_{\bx'},$$
 where $\Delta_{\bx'}=\Delta_{\bx_p}+\partial_r^2$.
Indeed, the conclusion    follows from   the identities
$$\Delta_{\bx_{p}}  F_1(\bx')
 =\overline{D}_{\bx_p} D_{\bx_p} F_1(\bx')=\overline{D}_{\bx_p}   \partial_{r} F_2 (\bx')=\partial_{r}\overline{D}_{\bx_p}   F_2(\bx') =-\partial_{r}^{2} F_1(\bx'),$$
$$\Delta_{\bx_{p}}  F_2(\bx')
 =D_{\bx_p} \overline{D}_{\bx_p}  F_2(\bx')=-D_{\bx_p}    \partial_{r} F_1(\bx') =-\partial_{r}D_{\bx_p}    F_1 (\bx')=-\partial_{r}^{2} F_2(\bx'),$$
and the fact that $F_1(\bx'), F_2(\bx')$ are  $C^{2}(D)$  by Proposition \ref{slice-monogenic-real-analytic}.\\
Hence by Lemma   \ref{Fueter-Sce-Qian-lemma-1}, we have
 $$\Delta_{\bx} f(\bx)=   (q-1) ( (\frac{1}{r}\partial_{r}) F_1(\bx')+\underline{\omega} (\partial_{r}\frac{1}{r}) F_2(\bx') ),$$
which shows the validity of (\ref{k}) for $k=1$.

Now suppose that (\ref{k}) holds for $k-1$, i.e.,
\begin{equation}\label{k-1}
 \Delta_{\bx}^{k-1}f(\bx)=C_{q}(k-1)( (\frac{1}{r}\partial_{r})^{k-1}F_1(\bx')+\underline{\omega} (\partial_{r}\frac{1}{r})^{k-1}F_2(\bx') ).
\end{equation}
To show (\ref{k})  from (\ref{k-1}), we set $A_{0}(\bx')=F_1(\bx'), B_{0}(\bx')=F_2(\bx')$, and, for $k\in \mathbb{N}\setminus\{0\}$,
$$A_{k}(\bx')=\frac{1}{r}\partial_{r}A_{k-1}(\bx') =    (\frac{1}{r}\partial_{r})^{k}F_1(\bx'), \quad
 B_{k}(\bx')=   \partial_{r} \frac{B_{k-1}}{r} =  (\partial_{r}\frac{1}{r})^{k}F_2(\bx'),$$
and  deduce by using also mathematical induction that, for $k\in \mathbb{N}$,
\begin{eqnarray}\label{inter-relation}
 \left\{
\begin{array}{ll}
D_{\bx_p}  A_{k}(\bx')- \partial_{r} B_{k}(\bx')=\frac{2k}{r}B_{k}(\bx'),
\\
 \overline{D}_{\bx_p} B_{k}(\bx') + \partial_{r} A_{k}(\bx')=0.
\end{array}
\right.
\end{eqnarray}
For $k=0$, the system coincides with the generalized Cauchy-Riemann equations, see Definition \ref{definition-GSR}. The step from $k-1$ to $k$ for (\ref{inter-relation}) is obtained as follows:
 \begin{eqnarray*}
 D_{\bx_p}  A_{k}(\bx')
&=&\frac{1}{r} \partial_{r} D_{\bx_p} A_{k-1}(\bx')\\
&=& \frac{1}{r} \partial_{r}  (\partial_{r} B_{k-1}(\bx')+\frac{2k-2}{r}B_{k-1}(\bx'))
 \\
&=&\partial_{r} B_{k}(\bx')+\frac{2k}{r}B_{k}(\bx'),
\end{eqnarray*}
and
 \begin{eqnarray*}
 \overline{D}_{\bx_p} B_{k}(\bx')
&=&\overline{D}_{\bx_p} ( \partial_{r} \frac{B_{k-1}(\bx')}{r}) \\
&=& \partial_{r} \frac{\overline{D}_{\bx_p} B_{k-1}(\bx')}{r}
 \\
&=&\partial_{r} \frac{- \partial_{r} A_{k-1}(\bx') }{r}\\
&=&- \partial_{r} A_{k}(\bx').
\end{eqnarray*}
 Now we can use (\ref{inter-relation}) to obtain (\ref{k})  from (\ref{k-1}). \\
 From (\ref{inter-relation}), we have
 \begin{eqnarray*}
 \Delta_{\bx_{p}}  A_{k-1}(\bx')
&=&\overline{D}_{\bx_p} D_{\bx_p}  A_{k-1}(\bx')\\
&=& \overline{D}_{\bx_p} (  \partial_{r}  +\frac{2k-2}{r})B_{k-1}(\bx')
 \\
&=&(  \partial_{r}  +\frac{2k-2}{r})\overline{D}_{\bx_p}B_{k-1}(\bx')\\
&=&(  \partial_{r}  +\frac{2k-2}{r}) (-\partial_{r}  A_{k-1}(\bx'))
\\
&=&-\partial_{r}^{2} A_{k-1}(\bx')-(2k-2)A_{k}(\bx'),
\end{eqnarray*}
and
 \begin{eqnarray*}
 \Delta_{\bx_{p}}  B_{k-1}(\bx')
&=&D_{\bx_p} \overline{D}_{\bx_p} B_{k-1}(\bx')\\
&=&D_{\bx_p} (- \partial_{r} A_{k-1}(\bx') )
 \\
&=&-\partial_{r}^{2} B_{k-1}(\bx')-(2k-2)\partial_{r} \frac{B_{k-1}(\bx')}{r}
\\
&=& -\partial_{r}^{2} B_{k-1}(\bx')-(2k-2) B_{k}(\bx'),
\end{eqnarray*}
then, combining with (\ref{Delta-f-F}), we obtain
\begin{eqnarray*}
\frac{\Delta_{\bx}^{k}f(\bx)}{C_{q}(k-1)}
&=&(\Delta_{\bx_{p}}+\partial_{r}^{2}+ \frac{q-1}{r}\partial_{r}+\frac{1}{r^{2}} \Delta_{\underline{\omega}}) ( A_{k-1}(\bx')+\underline{\omega} B_{k-1}(\bx') )
 \\
&=&\Delta_{\bx'} A_{k-1}(\bx')+\underline{\omega} \Delta_{\bx'} B_{k-1}(\bx') + (q-1)(\frac{1}{r}\partial_{r} A_{k-1}(\bx') +\underline{\omega} \partial_{r} \frac{B_{k-1}(\bx') }{r})
\\
&=&-(2k-2)(A_{k}(\bx') + \underline{\omega}  B_{k}(\bx')) +(q-1) (  A_{k}(\bx') +\underline{\omega}  B_{k}(\bx') )
\\
&=&(q-2k+1) (  A_{k}(\bx') +\underline{\omega}  B_{k}(\bx') ),
\end{eqnarray*}
which implies that
\begin{eqnarray*}
\Delta_{\bx}^{k}f(\bx)
&=&C_{q}(k-1)(q-2k+1)(  A_{k}(\bx') +\underline{\omega}  B_{k} (\bx'))
 \\
&=&C_{q}(k)( (\frac{1}{r}\partial_{r})^{k}F_1(\bx')+\underline{\omega} (\partial_{r}\frac{1}{r})^{k}F_2(\bx') ),
\end{eqnarray*}
as asserted.
\end{proof}

The previous results allow now to prove the Fueter-Sce  theorem for generalized partial-slice monogenic functions:
\begin{theorem}\label{Fueter-theorem}
Let     $f(\bx)=F_1(\bx')+\underline{\omega} F_2(\bx') \in \mathcal{GSR}(\Omega_{D})$.  Then, for odd $q\in \mathbb{N}$,
the function
 $$\tau_{q}f(\bx):=\frac{1}{(q-1)!!}\Delta_{\bx}^{\frac{q-1}{2}}f(\bx)=A(\bx')+\underline{\omega} B(\bx') $$
 where
 $$A(\bx') =  (\frac{1}{r}\partial_{r})^{\frac{q-1}{2}}F_1(\bx'),\quad  B(\bx')=  (\partial_{r}\frac{1}{r})^{\frac{q-1}{2}}F_2(\bx'),$$
  is monogenic in $\Omega_{D}$.
  \begin{proof}
 From  Lemmas  \ref{GS-monogenic-lemma} and \ref{Fueter-Sce-Qian-lemma-k}, we need only to   verify that the pair $(A, B)$ satisfies the equation (\ref{GS-is-Monogenic}), which is exactly (\ref{inter-relation}) with $k=\frac{q-1}{2}$.
 \end{proof}
 \end{theorem}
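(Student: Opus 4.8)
The plan is to reduce the statement to the two lemmas already established, matching the normalizing constant and then recognizing the intermediate system at the correct index as the monogenicity criterion. Since $q$ is odd, the exponent $\frac{q-1}{2}$ is a nonnegative integer, so I would first apply Lemma \ref{Fueter-Sce-Qian-lemma-k} with $k=\frac{q-1}{2}$ to obtain
\begin{equation*}
\Delta_{\bx}^{\frac{q-1}{2}}f(\bx)=C_{q}\!\left(\tfrac{q-1}{2}\right)\left((\tfrac{1}{r}\partial_{r})^{\frac{q-1}{2}}F_1(\bx')+\underline{\omega}\,(\partial_{r}\tfrac{1}{r})^{\frac{q-1}{2}}F_2(\bx')\right).
\end{equation*}
A short computation identifies the constant: $C_{q}(\frac{q-1}{2})=(q-1)(q-3)\cdots 2=(q-1)!!$, where the last factor equals $q-2\cdot\frac{q-1}{2}+1=2$. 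This explains the normalization $\frac{1}{(q-1)!!}$ and yields $\tau_{q}f=A+\underline{\omega}B$ with $A,B$ exactly as in the statement.

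Next I would verify that $\tau_{q}f$ is itself a generalized partial-slice function of class $C^{1}$, so that Lemma \ref{GS-monogenic-lemma} is applicable. The regularity is immediate from Proposition \ref{slice-monogenic-real-analytic}, since all components are real analytic; and the parity required by \eqref{even-odd} follows because $\frac{1}{r}\partial_{r}$ preserves evenness in $r$ while $\partial_{r}\frac{1}{r}$ preserves oddness, so that starting from $F_1$ even and $F_2$ odd one gets $A$ even and $B$ odd. With $\tau_{q}f\in\mathcal{GS}^{1}(\Omega_{D})\cap C^{1}(\Omega_{D})$ secured, Lemma \ref{GS-monogenic-lemma} reduces the monogenicity of $\tau_{q}f$ on $\Omega_{D}$ to checking that the pair $(A,B)$ satisfies the Vekua-type system \eqref{GS-is-Monogenic}.

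The decisive point is that this check has in effect already been performed: the system \eqref{inter-relation}, derived by induction inside the proof of Lemma \ref{Fueter-Sce-Qian-lemma-k} for $A_{k}=(\frac{1}{r}\partial_{r})^{k}F_1$ and $B_{k}=(\partial_{r}\frac{1}{r})^{k}F_2$, reads at $k=\frac{q-1}{2}$ as
\begin{equation*}
D_{\bx_p}A-\partial_{r}B=\frac{q-1}{r}B,\qquad \overline{D}_{\bx_p}B+\partial_{r}A=0,
\end{equation*}
which is precisely \eqref{GS-is-Monogenic} with $(F_1,F_2)$ replaced by $(A,B)$; hence the conclusion follows at once. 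I expect no genuine obstacle at this final stage: the analytic heavy lifting—the inductive passage \eqref{k-1}$\to$\eqref{k} and the derivation of \eqref{inter-relation}—is entirely absorbed into Lemma \ref{Fueter-Sce-Qian-lemma-k}, and the only points needing care are the bookkeeping of $C_{q}(\frac{q-1}{2})=(q-1)!!$, the parity argument guaranteeing $\tau_{q}f\in\mathcal{GS}^{1}(\Omega_{D})$, and the routine extension of $D_{\bx}(\tau_{q}f)=0$ from $\Omega_{D}\setminus\mathbb{R}^{p+1}$ to all of $\Omega_{D}$ by density and continuity, exactly as in the proof of Lemma \ref{GS-monogenic-lemma}.
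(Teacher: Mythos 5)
Your proposal is correct and follows essentially the same route as the paper's own proof: apply Lemma \ref{Fueter-Sce-Qian-lemma-k} at $k=\frac{q-1}{2}$, identify $C_q(\frac{q-1}{2})=(q-1)!!$, and observe that the system \eqref{inter-relation} at this index is exactly the monogenicity criterion \eqref{GS-is-Monogenic} of Lemma \ref{GS-monogenic-lemma} for the pair $(A,B)$. The additional details you supply (the parity and regularity of $(A,B)$, and the density argument) are exactly the points the paper leaves implicit, so your write-up is a faithful, slightly more explicit version of the same argument.
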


\begin{remark}
  Let $\mathcal{AM}(\Omega_D)$ be the special set of axially monogenic functions on $\Omega_D \subset \mathbb{R}^{p+q+1}$ defined by
 $$\mathcal{AM}(\Omega_D) = \mathcal{M} (\Omega_D) \cap \mathcal{GS}(\Omega_D). $$ Theorem \ref{Fueter-theorem} shows that the Fueter-Sce  map $\tau_q$ takes functions in $\mathcal{GSR}(\Omega_D)$ to $\mathcal{AM}(\Omega_D)$.
\end{remark}
 \begin{remark}
For $(p,q)=(0,n)$ with odd $n$, we obtain the well-known result, see \cite[Corollary, 3.10]{Colombo-Sabadini-Sommen-10} that
if $f$ is a slice monogenic function on an axially symmetric domain, then $\Delta^{\frac{n-1}{2}}f$ is  an axially monogenic function.
\end{remark}

The previous discussion can be reformulated in the following statement that we put in form of theorem since it provides a link between generalized partial-slice monogenic functions and classical monogenic functions.
\begin{theorem}\label{Fueter-theorem-GSM}
Let $\Omega\subseteq \mathbb{R}^{p+q+1}$ be a p-symmetric slice domain and let $f \in \mathcal{GSM}(\Omega)$. Then,  we can set $f(\bx)=F_1(\bx')+\underline{\omega} F_2(\bx')$ by Theorem \ref{Representation-Formula-SM},  and we have for odd $q\in \mathbb{N}$,
$$\tau_{q}f(\bx)=A(\bx')+\underline{\omega} B(\bx') \in \mathcal {M}(\Omega),$$
 where
 $$A(\bx') =  (\frac{1}{r}\partial_{r})^{\frac{q-1}{2}}F_1(\bx'),\quad  B(\bx')=  (\partial_{r}\frac{1}{r})^{\frac{q-1}{2}}F_2(\bx').$$
 \end{theorem}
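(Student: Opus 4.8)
The plan is to deduce this statement directly from the already-proved Theorem \ref{Fueter-theorem} by transferring $f$ from the class $\mathcal{GSM}(\Omega)$ into the class $\mathcal{GSR}(\Omega_{D})$, where the Vekua-type machinery of Section 3 applies verbatim. First I would record that, by hypothesis, $\Omega$ is a p-symmetric slice domain: the characterization of p-symmetric domains recalled in Section 2 gives $\Omega=\Omega_{D}$ for a suitable domain $D\subseteq\mathbb{R}^{p+2}$, while the definition of slice domain (Definition \ref{slice-domain}) ensures $\Omega\cap\mathbb{R}^{p+1}\neq\emptyset$. These are exactly the hypotheses under which part (ii) of Theorem \ref{relation-GSR-GSM} yields the equality $\mathcal{GSM}(\Omega)=\mathcal{GSR}(\Omega_{D})$.

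Next I would invoke the Representation Formula (Theorem \ref{Representation-Formula-SM}) to write $f(\bx)=F_1(\bx')+\underline{\omega} F_2(\bx')$, with the $\underline{\eta}$-independent components $F_1,F_2$ displayed there. This exhibits $f$ as a generalized partial-slice function, and the even--odd parity (\ref{even-odd}) of the pair $(F_1,F_2)$ is immediate from their defining expressions. By the identification of the previous step, $f\in\mathcal{GSR}(\Omega_{D})$, so these same components satisfy the Cauchy--Riemann system (\ref{C-R}).

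Finally I would apply Theorem \ref{Fueter-theorem} to this $f\in\mathcal{GSR}(\Omega_{D})$. Since $q$ is odd, $\frac{q-1}{2}$ is a nonnegative integer and, by Proposition \ref{slice-monogenic-real-analytic} together with Remark \ref{0-lim}, the iterated operators $(\frac{1}{r}\partial_{r})^{\frac{q-1}{2}}$ and $(\partial_{r}\frac{1}{r})^{\frac{q-1}{2}}$ are well-posed on all of $D$, including the slice $r=0$ in the limiting sense. Theorem \ref{Fueter-theorem} then gives $\tau_{q}f(\bx)=A(\bx')+\underline{\omega} B(\bx')$ with $A,B$ precisely the stated expressions, and $\tau_{q}f\in\mathcal{M}(\Omega_{D})=\mathcal{M}(\Omega)$, which is the desired conclusion.

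The only subtlety worth flagging is the compatibility of the two descriptions of the components: one must be sure that the pair $(F_1,F_2)$ furnished by the Representation Formula coincides with the pair for which the Cauchy--Riemann system holds. This causes no difficulty, because the parity conditions (\ref{even-odd}) determine $F_1$ and $F_2$ uniquely from $f$ (by evaluating on $\pm\underline{\omega}$, as in the proof of Lemma \ref{GS-monogenic-lemma}), so the decomposition is unambiguous. In short, this theorem is Theorem \ref{Fueter-theorem} transcribed into the language of $\mathcal{GSM}$, with Theorem \ref{relation-GSR-GSM}(ii) serving as the dictionary.
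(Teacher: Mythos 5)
Your proof is correct and follows essentially the same route as the paper, which simply cites Theorem \ref{Fueter-theorem} together with Theorem \ref{relation-GSR-GSM}(ii); your write-up just makes explicit the hypothesis-checking (p-symmetry giving $\Omega=\Omega_D$, the slice condition giving $\Omega\cap\mathbb{R}^{p+1}\neq\emptyset$) and the uniqueness of the decomposition $f=F_1+\underline{\omega}F_2$, which the paper leaves implicit.
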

\begin{proof}
It immediately follows from  Theorem  \ref{Fueter-theorem} and Theorem  \ref{relation-GSR-GSM}.
\end{proof}

\section{CK-extensions}

In this section we deepen the knowledge on CK-extension already given in \cite{Xu-Sabadini}, indeed in the sequel we shall need more refined results to work with the Radon transform.

We begin by observing that for each arbitrary but fixed $\eta\in\mathbb S$, generalized partial-slice monogenic functions can be seen as classical monogenic functions. Hence it is possible to obtain a type of  Cauchy-Kovalevskaya extension starting from a real analytic function defined on some domain in $\mathbb R^{p+1}$.

\begin{definition}[CK-extension]\label{Slice-Cauchy-Kovalevska-extension}
Let   $\Omega_{0}$ be a domain in  $\mathbb{R}^{p+1}$. The domain $\Omega_{0}^{\ast}$ is defined by
$$\Omega_{0}^{\ast}=\{ \bx_{p}+\underline{\bx}_{q}:\bx_{p}\in \Omega_{0}, \underline{\bx}_{q}\in \mathbb{R}^{q}\}.$$
Given a  real analytic function $f_{0}: \Omega_{0}  \to \mathbb{R}_{p+q}$, if there is a generalized partial-slice monogenic function $f^{\ast}$ in $\Omega \subseteq \Omega_{0}^{\ast}$ with $\Omega_{0} \subset \Omega$ and $f^{\ast}(\bx_{p})=f_0(\bx_{p})$.
The function $f^{\ast}$  is called the slice Cauchy-Kovalevskaya extension (CK-extension) of the real function $f_{0}$.
\end{definition}

\begin{theorem}\label{slice-Cauchy-Kovalevsky-extension}
Let   $\Omega_{0}$ be a domain in  $\mathbb{R}^{p+1}$ and  $f_{0}: \Omega_{0}  \to \mathbb{R}_{p+q}$ be a real analytic function.  Then the function given by
$$CK[f_{0}](\bx)= {\rm exp} (\underline{\bx}_q D_{\bx_p} ) f_{0}(\bx_p)
=\sum_{k=0}^{+\infty} \frac{1}{k!}( \underline{\bx}_q D_{\bx_p})^{k}f_{0}(\bx_p), \quad \bx=\bx_p+ \underline{\bx}_q,$$
is the solution of  the slice Cauchy-Kovalevskaya extension in a  p-symmetric slice domain $\Omega \subseteq \Omega_{0}^{\ast}$ with $\Omega_{0} \subset \Omega$.
\end{theorem}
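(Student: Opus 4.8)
The plan is to establish three facts: that the exponential series converges to a real analytic function on a genuine open set $\Omega$ which can be taken to be a p-symmetric slice domain with $\Omega\cap\mathbb{R}^{p+1}=\Omega_0$; that the resulting function is generalized partial-slice monogenic; and that it restricts to $f_0$ on $\Omega_0$ and is the unique such extension. The guiding idea is that for a fixed $\underline{\omega}\in\mathbb{S}$ the defining equation $D_{\underline{\omega}}f_{\underline{\omega}}=(D_{\bx_p}+\underline{\omega}\partial_r)f_{\underline{\omega}}=0$ is a Cauchy--Riemann type operator in the $p+2$ variables $(\bx_p,r)$; imposing $f_{\underline{\omega}}|_{r=0}=f_0$ turns monogenicity into the Cauchy problem $\partial_r f_{\underline{\omega}}=\underline{\omega}D_{\bx_p}f_{\underline{\omega}}$ (using $\underline{\omega}^{-1}=-\underline{\omega}$), whose formal power-series solution in $r$ is precisely $\exp(r\underline{\omega}D_{\bx_p})f_0=\exp(\underline{\bx}_q D_{\bx_p})f_0$. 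This is the motivation for the stated formula, and it is what I would verify rigorously.

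For convergence I would invoke the real analyticity of $f_0$: near each $\bx_p^0\in\Omega_0$ the components of $f_0$ satisfy Cauchy-type estimates $|\partial^\alpha f_0|\le C A^{|\alpha|}|\alpha|!$. Expanding $(\underline{\bx}_q D_{\bx_p})^k$ and using that $\underline{\bx}_q$ is constant in $\bx_p$, each term has the form $\underline{\bx}_q e_{i_1}\cdots\underline{\bx}_q e_{i_k}\,\partial_{x_{i_1}}\cdots\partial_{x_{i_k}}f_0$; the non-commutativity only produces products of $\underline{\bx}_q$ with the unit vectors $e_i$, whose Clifford norm is bounded by $c^k|\underline{\bx}_q|^k$ for a fixed constant $c=c(p,q)$. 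Summing the $(p+1)^k$ such terms yields $|(\underline{\bx}_q D_{\bx_p})^k f_0|\le C\,(B|\underline{\bx}_q|)^k k!$ for a suitable $B$, hence $\frac1{k!}|(\underline{\bx}_q D_{\bx_p})^k f_0|\le C(B|\underline{\bx}_q|)^k$ and the series converges geometrically once $|\underline{\bx}_q|<1/B$. Since this bound depends only on $\bx_p$ and $r=|\underline{\bx}_q|$, the domain of convergence is automatically invariant under $\underline{\bx}_q\mapsto r\mathbb{S}$; I would then fix $\Omega$ to be a p-symmetric slice domain inside it with $\Omega\cap\mathbb{R}^{p+1}=\Omega_0$, on which $CK[f_0]$ is real analytic and in particular has continuous partial derivatives on every slice.

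For monogenicity, fix $\underline{\omega}\in\mathbb{S}$ and set $g(\bx_p,r)=\exp(r\underline{\omega}D_{\bx_p})f_0(\bx_p)=\sum_k \frac{r^k}{k!}(\underline{\omega}D_{\bx_p})^k f_0$. Differentiating term by term, which is justified by the local uniform convergence established above, gives $\partial_r g=\underline{\omega}D_{\bx_p}g$, whence $D_{\underline{\omega}}g=D_{\bx_p}g+\underline{\omega}\partial_r g=D_{\bx_p}g+\underline{\omega}^2 D_{\bx_p}g=0$ because $\underline{\omega}^2=-1$. As $\underline{\omega}\in\mathbb{S}$ is arbitrary, $CK[f_0]$ satisfies Definition \ref{definition-slice-monogenic} on $\Omega$. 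Setting $\underline{\bx}_q=0$ leaves only the $k=0$ term, so $CK[f_0](\bx_p)=f_0(\bx_p)$ on $\Omega_0$, giving the extension property. Uniqueness follows from the Identity Theorem (Theorem \ref{Identity-theorem}): any two CK-extensions agree on the $(p+1)$-dimensional manifold $\Omega_0\subseteq\mathbb{R}^{p+1}\subseteq\Omega_{\underline{\omega}}$, hence coincide on the slice domain $\Omega$.

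I expect the main obstacle to be the convergence step: one must pin down the Cauchy estimates for the Clifford-valued real analytic $f_0$, track the non-commutative Clifford factors in $(\underline{\bx}_q D_{\bx_p})^k$ carefully enough to obtain a clean geometric bound, and then verify that the resulting radius of convergence (depending only on $\bx_p$ and $r$) genuinely carves out an open p-symmetric slice domain containing $\Omega_0$. Once convergence and its local uniformity are secured, interchanging summation with $\partial_r$ and $D_{\bx_p}$ is routine, and the monogenicity collapses to the single identity $\underline{\omega}^2=-1$.
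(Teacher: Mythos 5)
Your proposal is correct and takes essentially the same route as the paper's proof: slice-wise convergence of $\sum_k \frac{r^k}{k!}(\underline{\omega}D_{\bx_p})^k f_0$ obtained from real-analyticity estimates (the paper uses the absolutely convergent Taylor expansion with equal increments and the exact sign bookkeeping coming from $(\underline{\omega}D_{\bx_p})^2=-\Delta_{\bx_p}$, while you use Cauchy estimates plus a Clifford-norm submultiplicativity constant, which yields the same geometric bound and the same p-symmetric slice domain $\Omega$), followed by termwise differentiation and the identity $\underline{\omega}^2=-1$ to conclude $D_{\underline{\omega}}CK[f_0]_{\underline{\omega}}=0$ and the obvious restriction property. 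Your closing uniqueness argument via Theorem \ref{Identity-theorem} is a harmless addition; the paper defers that point to the theorem that follows.
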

\begin{proof}
 First, note that $CK[f_{0}]$ is well-defined since the series  $\sum_{k=0}^{+\infty} \frac{1}{k!}( \underline{\bx}_q D_{\bx_p})^{k}f_{0}(\bx_p)$ converges  in a   p-symmetric slice domain $\Omega \subseteq \Omega_{0}^{\ast}$ with $\Omega_{0} \subset \Omega$. To see this,
we consider its restriction to $\Omega_{\underline{\omega}}$ for each $\underline{\omega}\in \mathbb{S}$, i.e.,
 $$CK[f_{0}](\bx_p+ r \underline{\omega})=\sum_{k=0}^{+\infty} \frac{r^{k}}{k!}( \underline{\omega} D_{\bx_p})^{k}f_{0}(\bx_p).$$
Recall  that real analytic functions possess absolutely converging Taylor series representations at every point
$$f_{0}(\bx_p+\boldsymbol{h})=\sum_{k=0}^{+\infty} \sum_{|\mathrm{k}|=k} \frac{\boldsymbol{h}^{\mathrm{k}}}{\mathrm{k}!}
  \partial_{\mathrm{k}} f_{0}(\bx_p),\quad \bx_{p}\in  \Omega_{0},$$
  absolutely convergent  for $|\boldsymbol{h}|$ small enough. Let  $\boldsymbol{h}$ be the $(p+1)$-tuple $(h,...,h)$ with $h>0$; we have $\boldsymbol{h}^{\mathrm{k}}=h^{k}$ and
  $$\sum_{k=0}^{+\infty} h^{k}\sum_{|\mathrm{k}|=k} \frac{1}{\mathrm{k}!}|\partial_{\mathrm{k}} f_{0}(\bx_p)|<+\infty.$$
From the basic fact
\begin{equation}\label{Cauchy-Kovalevska-extension-odd-even-norm}
 |( \underline{\omega} D_{\bx_p})^{k}f_{0}(\bx_p)|=
\Big|\sum_{i_1,\ldots, i_k=0}^p s_{i_1\ldots i_k} e_{i_1}\ldots e_{i_k}\frac{\partial^k}{\partial_{x_{i_1}}\ldots \partial_{x_{i_k}}}f_0(\bx_p) \Big|,\quad \end{equation}
where $s_{i_1\ldots i_k}= \pm1,$ we infer that
$$|( \underline{\omega} D_{\bx_p})^{k}f_{0}(\bx_p)| \leq \sum_{i_1,\ldots, i_k=0}^p \Big|\frac{\partial^k}{\partial_{x_{i_1}}\ldots \partial_{x_{i_k}}}f_0(\bx_p)\Big|= k!\sum_{|\mathrm{k}|=k} \frac{1}{\mathrm{k}!}| \partial_{\mathrm{k}}f_{0}(\bx_p)|.$$
Thus,
$$\sum_{k=0}^{+\infty} \frac{r^{k}}{k!}|( \underline{\omega} D_{\bx_p})^{k}f_{0}(\bx_p)|
\leq\sum_{k=0}^{+\infty} r^{k} \sum_{|\mathrm{k}|=k} \frac{1}{\mathrm{k}!}| \partial_{\mathrm{k}}f_{0}(\bx_p)|.$$
We deduce that, for any $\underline{\omega}\in \mathbb{S}$ and any compact $K\subset  \Omega_{0}$, there exists some constant $h_{K}>0$ such that the series $\sum_{k=0}^{+\infty} \frac{r^{k}}{k!}( \underline{\omega} D_{\bx_p})^{k}f_{0}(\bx_p)$ converges  for $K \times (-h_{K},h_{K})$.
Therefore, the series $\sum_{k=0}^{+\infty} \frac{1}{k!}( \underline{\bx}_q D_{\bx_p})^{k}f_{0}(\bx_p)$   converges normally on $$\Omega=\bigcup_{K\subset  \Omega_{0}} K \times \{\underline{\bx}_{q}\in \mathbb{R}^{q}: |\underline{\bx}_{q}|<h_{K}\},$$
which is clearly a p-symmetric and slice domain in $\Omega_{0}^{\ast}$.

Now we show that $CK[f_{0}]\in \mathcal{GSM}^{L}(\Omega)$. To see this, fix $\underline{\omega}\in \mathbb{S}$ and then we have, for $\bx_p+ r \underline{\omega}\in \Omega_{\underline{\omega}}$,
\begin{eqnarray*}
 (\underline{\omega}\partial_{r})  CK[f_{0}](\bx_p+ r \underline{\omega})
 &=&\underline{\omega} \sum_{k=0}^{+\infty} \frac{r^{k}}{k!}( \underline{\omega} D_{\bx_p})^{k+1}f_{0}(\bx_p)
 \\
&=&-D_{\bx_p}\sum_{k=0}^{+\infty} \frac{r^{k}}{k!}( \underline{\omega} D_{\bx_p})^{k}f_{0}(\bx_p)
 \\
&=&-D_{\bx_p}CK[f_{0}](\bx_p+ r \underline{\omega}).
\end{eqnarray*}
Hence,
$$D_{\underline{\omega}} CK[f_{0}]_{\underline{\omega}}=0.$$

Finally, combined with the obvious fact  $CK[f_{0}](\bx_p)=f_{0}(\bx_p)$, we get the assertion.

\end{proof}

\begin{remark}\label{Cauchy-Kovalevska-extension-odd-even}
In fact, formula   (\ref{Cauchy-Kovalevska-extension-odd-even-norm})  depends on the facts that $\Delta_{\bx_p}$ is real operator  and the identities
$$( \underline{\omega} D_{\bx_p})^{2}=\underline{\omega}\, \underline{\omega} \overline{D}_{\bx_p}D_{\bx_p}=  -\Delta_{\bx_p}.$$
which   imply also that  $CK[f_{0}]$ has the following  decomposition
\begin{eqnarray*}
 & & CK[f_{0}](\bx_p+ r \underline{\omega})\\
 &=&\sum_{k=0}^{+\infty} \frac{r^{2k}}{(2k)!} \underline{\omega}^{2k} \Delta_{\bx_p} ^{k} f_{0}(\bx_p)
+\sum_{k=0}^{+\infty} \frac{r^{2k+1}}{(2k+1)!}\underline{\omega}^{2k+1}  \Delta_{\bx_p} ^{k} D_{\bx_p} f_{0}(\bx_p)
 \\
&=&\sum_{k=0}^{+\infty} \frac{r^{2k}}{(2k)!}  (-\Delta_{\bx_p})^{k} f_{0}(\bx_p)
+\underline{\omega} D_{\bx_p} \sum_{k=0}^{+\infty} \frac{r^{2k+1}}{(2k+1)!}(-\Delta_{\bx_p}) ^{k}f_{0}(\bx_p).
\end{eqnarray*}
\end{remark}

\begin{theorem}Let $f_{0}:  \mathbb{R}^{p+1} \to \mathbb{R}_{p+q}$ be a real analytic function. Then $CK[f_{0}]$  is the unique extension of $f_{0}$ to $\mathbb{R}^{p+q+1}$ preserving generalized partial-slice monogenicity.
 In particular, for $f_{0}: \mathbb{R}^{p+1} \to\mathbb{R}$, we have $CK[f_{0}](\bx)=F_1(\bx')+\underline{\omega} F_2(\bx') \in \mathcal{GSM}^{L}(\mathbb{R}^{p+q+1})$ with $F_1(\bx')\in \mathbb{R}, F_2(\bx')\in \mathrm{span}_{\mathbb{R}}\{1,e_{1},\ldots,e_p \}$ and
$$CK[f_{0}](\bx_p +r \underline{\omega}) (\overline{D}_{\bx_p}+ \underline{\omega}\partial_{r})=0.$$
\end{theorem}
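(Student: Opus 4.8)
The plan is to establish three separate claims: uniqueness of the monogenicity-preserving extension, the scalar-part structure of $CK[f_0]$ when $f_0$ is real-valued, and the right-type conjugate equation. For uniqueness, I would argue via the Identity Theorem (Theorem \ref{Identity-theorem}). Suppose $g$ is another generalized partial-slice monogenic extension of $f_0$ to $\mathbb{R}^{p+q+1}$. Since $\mathbb{R}^{p+q+1}$ is trivially a slice domain and $g$ agrees with $CK[f_0]$ on all of $\mathbb{R}^{p+1}$, which is a $(p+1)$-dimensional smooth manifold contained in every slice $\Omega_{\underline{\omega}}$, the Identity Theorem forces $g \equiv CK[f_0]$. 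This is the cleanest route and avoids re-deriving the series.

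For the scalar case $f_0:\mathbb{R}^{p+1}\to\mathbb{R}$, I would invoke the explicit even-odd decomposition recorded in Remark \ref{Cauchy-Kovalevska-extension-odd-even}, namely
$$CK[f_0](\bx_p+r\underline{\omega}) = \sum_{k=0}^{+\infty}\frac{r^{2k}}{(2k)!}(-\Delta_{\bx_p})^k f_0(\bx_p) + \underline{\omega}\, D_{\bx_p}\sum_{k=0}^{+\infty}\frac{r^{2k+1}}{(2k+1)!}(-\Delta_{\bx_p})^k f_0(\bx_p).$$
Reading off the components, $F_1(\bx') = \sum_k \frac{r^{2k}}{(2k)!}(-\Delta_{\bx_p})^k f_0(\bx_p)$ and $\underline{\omega}F_2(\bx') = \underline{\omega}\,D_{\bx_p}(\cdots)$. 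Since $\Delta_{\bx_p}$ is a real scalar operator, each $(-\Delta_{\bx_p})^k f_0$ is real-valued, so $F_1(\bx')\in\mathbb{R}$. For $F_2$, the operator $D_{\bx_p}=\sum_{i=0}^p e_i\partial_{x_i}$ applied to a real-valued function produces a paravector, i.e. a function with values in $\mathrm{span}_{\mathbb{R}}\{1,e_1,\ldots,e_p\}$; hence $F_2(\bx')\in\mathrm{span}_{\mathbb{R}}\{1,e_1,\ldots,e_p\}$ as claimed.

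For the conjugate equation $CK[f_0](\bx_p+r\underline{\omega})(\overline{D}_{\bx_p}+\underline{\omega}\partial_r)=0$, I would compute directly, mirroring the monogenicity computation in the proof of Theorem \ref{slice-Cauchy-Kovalevsky-extension} but acting from the right. The key point is that when $f_0$ is real-valued, the terms $(-\Delta_{\bx_p})^k f_0$ and $D_{\bx_p}(-\Delta_{\bx_p})^k f_0$ are paravector-valued, and for a paravector $a$ one has the commutation behavior $a\overline{D}_{\bx_p}$ mirroring $\overline{D}_{\bx_p}a$ in a way that makes the right-action computation parallel the left one. Using $\overline{D}_{\bx_p}D_{\bx_p}=\Delta_{\bx_p}$ and matching the $r$-derivatives against the spatial $\overline{D}_{\bx_p}$-derivatives term-by-term in the series, the two series cancel exactly as in the left-monogenic case.

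The main obstacle will be the last step: the right-action computation is delicate because the Clifford factor $\underline{\omega}$ and the components do not commute in general, so I must exploit the restriction that $f_0$ is real-valued (so that $F_1,F_2$ take values in the commutative-enough paravector space $\mathrm{span}_{\mathbb{R}}\{1,e_1,\ldots,e_p\}$) to justify moving $\underline{\omega}$ and $\overline{D}_{\bx_p}$ past the coefficients. Concretely I expect to verify that on paravector-valued functions the identity $\underline{\omega}\,\partial_r(\underline{\omega}F_2) + F_1\overline{D}_{\bx_p}$-type terms reorganize into the generalized Cauchy-Riemann system \eqref{C-R} read from the right, which then vanishes by construction; care with the sign conventions in $\overline{D}_{\bx_p}$ versus $D_{\bx_p}$ will be the part requiring the most attention.
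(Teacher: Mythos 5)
Your first two steps coincide with the paper's own proof: uniqueness is obtained exactly as you describe, from Theorem \ref{Identity-theorem} together with Theorem \ref{slice-Cauchy-Kovalevsky-extension}, and the statements about $F_1$ and $F_2$ are read off from the decomposition in Remark \ref{Cauchy-Kovalevska-extension-odd-even}. The gap is in your third step, and it is precisely at the point you flag as ``delicate.'' The principle you invoke --- that for a paravector-valued function $a$ the right action $a\overline{D}_{\bx_p}$ ``mirrors'' the left action $\overline{D}_{\bx_p}a$ --- is false in general: for $F_2=e_1x_2$ (paravector-valued) one has $\overline{D}_{\bx_p}F_2=\overline{e_2}\,e_1=e_1e_2$ while $F_2\overline{D}_{\bx_p}=e_1\overline{e_2}=-e_1e_2$. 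Writing out $CK[f_0](\overline{D}_{\bx_p}+\underline{\omega}\partial_r)$ and using $\underline{\omega}F_2\underline{\omega}=-\overline{F_2}$, what you actually need is the right-handed system $\overline{D}_{\bx_p}F_1=\partial_r\overline{F_2}$ and $F_2\overline{D}_{\bx_p}=-\partial_rF_1$. The first equation does follow from \eqref{C-R} by Clifford conjugation (using that $F_1$ is real and $F_2$ paravector-valued), but the second requires $F_2\overline{D}_{\bx_p}=\overline{D}_{\bx_p}F_2$, which, as the example shows, paravector-valuedness alone cannot deliver. What saves the argument is the finer structure coming from $f_0$ being real: by Remark \ref{Cauchy-Kovalevska-extension-odd-even}, $F_2=D_{\bx_p}G$ with $G$ real-valued, so by symmetry of mixed partials $F_2\overline{D}_{\bx_p}=\sum_{i,j}e_j\overline{e_i}\,\partial_{x_i}\partial_{x_j}G=\Delta_{\bx_p}G=\overline{D}_{\bx_p}F_2$, since $e_j\overline{e_i}+e_i\overline{e_j}=2\delta_{ij}$. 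You gesture at the real-valuedness of $f_0$, but you locate its role in the wrong place (an alleged commutativity property of paravectors), so as written the key cancellation is asserted rather than proved.

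For comparison, the paper sidesteps the $(F_1,F_2)$ decomposition entirely in this step and computes on the exponential series itself: since $f_0$ is real, all its partial derivatives are scalars and commute with every Clifford coefficient, whence $(\underline{\omega}D_{\bx_p})^{k}f_0=f_0(\underline{\omega}D_{\bx_p})^{k}$ (left operator action equals right operator action); then the single commutation identity $g\,(\underline{\omega}D_{\bx_p}\underline{\omega})=g\,\overline{D}_{\bx_p}\underline{\omega}^{2}=-g\overline{D}_{\bx_p}$ applied termwise gives $CK[f_0](\underline{\omega}\partial_r)=-CK[f_0]\overline{D}_{\bx_p}$ in one stroke. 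Either repair --- the gradient structure $F_2=D_{\bx_p}G$ with the symmetry of second derivatives, or the paper's transposition of operators through the real function $f_0$ --- closes your gap, but one of them must be made explicit; without it the step ``the two series cancel exactly as in the left-monogenic case'' does not stand.
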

\begin{proof}
In view of  Theorem  \ref{Identity-theorem},  Theorem \ref{slice-Cauchy-Kovalevsky-extension} and Remark \ref{Cauchy-Kovalevska-extension-odd-even}, it remains to  prove the last statement $CK[f_{0}](\bx_p +r \underline{\omega}) (\overline{D}_{\bx_p}+ \underline{\omega}\partial_{r})=0.$ To see this, keeping in mind of $f_{0}$ being real-valued, we have
\begin{eqnarray*}
   CK[f_{0}](\bx_p+ r \underline{\omega}) (\underline{\omega}\partial_{r})
 &=&\Big( \sum_{k=0}^{+\infty} \frac{r^{k}}{k!}( \underline{\omega} D_{\bx_p})^{k+1}f_{0}(\bx_p)\Big) \underline{\omega}
 \\
  &=&\sum_{k=0}^{+\infty} \frac{r^{k}}{k!} \Big(f_{0}(\bx_p) ( \underline{\omega} D_{\bx_p})^{k+1}\Big) \underline{\omega}
 \\
  &=&\sum_{k=0}^{+\infty} \frac{r^{k}}{k!} \Big(f_{0}(\bx_p) ( \underline{\omega} D_{\bx_p})^{k}\Big) \underline{\omega} D_{\bx_p}\underline{\omega}
 \\
 &=&\sum_{k=0}^{+\infty} \frac{r^{k}}{k!} \Big( ( \underline{\omega} D_{\bx_p})^{k}f_{0}(\bx_p)\Big) \overline{D}_{\bx_p} \underline{\omega}^{2}
 \\
&=&-CK[f_{0}](\bx_p+ r \underline{\omega})\overline{D}_{\bx_p},
\end{eqnarray*}
which completes the proof.
\end{proof}

In the following result, we establish   the generalized CK-extension for monogenic functions, which is a   modified version of \cite[Theorem 5.1.1]{Delanghe-Sommen-Soucek}.
\begin{theorem}[generalized CK-extension]\label{generalized-CK-extension}
Let  $\Omega_{0}\subseteq \mathbb{R}^{p+1}$ be a domain and  consider the real analytic  function $f_{0}(\bx_{p}): \Omega_{0}\rightarrow \mathbb{R}_{p+q}$. Then there exists a unique real analytic functions sequence $\{f_{k}(\bx_{p})\}_{k=1}^{\infty} $ such that the series
\begin{equation}\label{Cauchy-Kovalevska-extension-sum}
f(\bx) =GCK[f_{0}](\bx)= \sum_{k=0}^{+\infty} \underline{\bx}_{q}^{k}f_{k}(\bx_p) \end{equation}
is convergent in a p-symmetric  slice domain $\Omega \subseteq \Omega_{0}^{\ast}$ with $\Omega_{0} \subset \Omega$ and its sum $f$ is monogenic in $\Omega$.
Furthermore,  the function $f$ can be written in a formal way as
\begin{equation}\label{Cauchy-Kovalevska-extension-expression}
 f(\bx) =\Gamma(\frac{q}{2})( \frac{r\sqrt{\Delta_{\bx_{p}}}}{2})^{-\frac{q}{2}} \big[ \frac{r\sqrt{\Delta_{\bx_{p}}}}{2} J_{\frac{q}{2}-1}(r\sqrt{\Delta_{\bx_{p}}}) + \frac{\underline{\bx}_{q}D_{\bx_{p}}}{2} J_{\frac{q}{2}}(r\sqrt{\Delta_{\bx_{p}}}) \big] f_{0}(\bx_{p}), \end{equation}
where $r=|\underline{\bx}_{q}|$, $\sqrt{\Delta_{\bx_{p}}}$ denotes the square root of the Laplacian $\Delta_{\bx_{p}}$ (of which only even powers
occur in the resulting series),  and $J_{n}$ is the Bessel function of the first kind of order $n$ given by
$$J_{n}(x)= \sum_{k=0}^{+\infty}\frac{(-1)^{k}(x/2)^{n+2k}}{k!\Gamma(n+k+1)}.$$
\end{theorem}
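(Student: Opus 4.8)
The plan is to determine the sequence $\{f_k\}$ by forcing the formal series to satisfy $D_{\bx}f=0$, read off a recursion, solve it in closed form, match the result against the Bessel expansion, and finally establish normal convergence on a suitable domain.

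First I would substitute $f(\bx)=\sum_{k=0}^{\infty}\underline{\bx}_q^k f_k(\bx_p)$ into $D_{\bx}=D_{\bx_p}+D_{\underline{\bx}_q}$ and exploit two elementary Clifford identities. Since $\underline{\bx}_q^2=-r^2$ and $\underline{\bx}_q$ is a $1$-vector in the directions $e_{p+1},\dots,e_{p+q}$, a direct computation gives the standard formula
$$D_{\underline{\bx}_q}\underline{\bx}_q^k = -\gamma_k\,\underline{\bx}_q^{k-1}, \qquad \gamma_k = \begin{cases} k, & k \text{ even},\\ k+q-1, & k \text{ odd}.\end{cases}$$
On the other hand, since $e_0=1$ commutes with $\underline{\bx}_q$ while each $e_i$ with $1\le i\le p$ anticommutes with it, moving $D_{\bx_p}$ through $\underline{\bx}_q^k$ flips the sign only on the non-scalar part, producing
$$D_{\bx_p}(\underline{\bx}_q^k f_k) = \underline{\bx}_q^k\,\mathcal{D}_k f_k, \qquad \mathcal{D}_k = \begin{cases} D_{\bx_p}, & k \text{ even},\\ \overline{D}_{\bx_p}, & k \text{ odd}.\end{cases}$$
Collecting the coefficient of $\underline{\bx}_q^m$ in $D_{\bx}f=0$ then gives the recursion $f_{m+1}=\gamma_{m+1}^{-1}\mathcal{D}_m f_m$, which determines $\{f_k\}_{k\ge1}$ uniquely from $f_0$.

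Second, I would solve the recursion. Because the $\mathcal{D}_m$ alternate as $D_{\bx_p},\overline{D}_{\bx_p},D_{\bx_p},\dots$, consecutive applications collapse through $\overline{D}_{\bx_p}D_{\bx_p}=\Delta_{\bx_p}$, yielding $f_{2k}=(\prod_{j=1}^{2k}\gamma_j)^{-1}\Delta_{\bx_p}^k f_0$ and $f_{2k+1}=(\prod_{j=1}^{2k+1}\gamma_j)^{-1}D_{\bx_p}\Delta_{\bx_p}^k f_0$. Splitting the product of the $\gamma_j$ over even and odd indices gives $\prod_{j=1}^{2k}\gamma_j=4^k k!\,\Gamma(\tfrac q2+k)/\Gamma(\tfrac q2)$, whence
$$f_{2k} = \frac{\Gamma(q/2)}{4^k k!\,\Gamma(q/2+k)}\Delta_{\bx_p}^k f_0, \qquad f_{2k+1} = \frac{\Gamma(q/2)}{2\cdot 4^k k!\,\Gamma(q/2+k+1)}D_{\bx_p}\Delta_{\bx_p}^k f_0.$$
Reassembling $\sum_k\underline{\bx}_q^k f_k$ via $\underline{\bx}_q^{2k}=(-1)^k r^{2k}$ recovers precisely the two Bessel series $J_{q/2-1}(r\sqrt{\Delta_{\bx_p}})$ and $J_{q/2}(r\sqrt{\Delta_{\bx_p}})$ in (\ref{Cauchy-Kovalevska-extension-expression}); this last identification is essentially a bookkeeping check on the $\Gamma$-factors.

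Third, for convergence and monogenicity I would argue as in the proof of Theorem \ref{slice-Cauchy-Kovalevsky-extension}. Writing $\Delta_{\bx_p}^k=(\overline{D}_{\bx_p}D_{\bx_p})^k$ as a product of $2k$ first-order operators and bounding the norm of the resulting Clifford combination of derivatives by the absolute Taylor coefficients of $f_0$, real analyticity yields $|\Delta_{\bx_p}^k f_0(\bx_p)|\le (2k)!\sum_{|\mathrm{k}|=2k}\frac{1}{\mathrm{k}!}|\partial_{\mathrm{k}}f_0(\bx_p)|$, and a similar bound for $|D_{\bx_p}\Delta_{\bx_p}^k f_0|$. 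Feeding these into $|\underline{\bx}_q^k f_k|=r^k|f_k|$ together with the explicit $\Gamma$-constants shows that $\sum_k\underline{\bx}_q^k f_k$ converges normally on a p-symmetric slice neighborhood $\Omega$ of $\Omega_0$ inside $\Omega_0^{\ast}$. Normal convergence legitimizes the termwise evaluation of $D_{\bx}f$, so $f$ is monogenic; restricting to $\underline{\bx}_q=0$ kills every term but $k=0$, giving $f|_{\Omega_0}=f_0$. Uniqueness of $\{f_k\}$ is built into the recursion, and uniqueness of $f$ as a monogenic extension follows from the identity theorem, Theorem \ref{Identity-theorem}. The main obstacle I expect is the convergence step: one must produce uniform control of the operator products $D_{\bx_p}\Delta_{\bx_p}^k f_0$ compatible with the growth of $(\prod_j\gamma_j)^{-1}$ and confirm that the domain of convergence is genuinely a p-symmetric slice domain containing $\Omega_0$, whereas the recursion and the Bessel matching, though requiring care with the even/odd alternation between $D_{\bx_p}$ and $\overline{D}_{\bx_p}$, are routine once the two operator identities above are in place.
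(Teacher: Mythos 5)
Your proposal is correct and follows essentially the same route as the paper's proof: derive the recursion $\gamma_{m+1}f_{m+1}=\mathcal{D}_m f_m$ from the commutation identities of $D_{\underline{\bx}_q}$ and $D_{\bx_p}$ with powers of $\underline{\bx}_q$ (your unified $\gamma_k$, $\mathcal{D}_k$ notation compresses the paper's four separate identities), solve it to get $f_{2k}=\frac{\Gamma(q/2)}{2^{2k}k!\Gamma(k+q/2)}\Delta_{\bx_p}^k f_0$ and $f_{2k+1}=\frac{\Gamma(q/2)}{2^{2k+1}k!\Gamma(k+q/2+1)}D_{\bx_p}\Delta_{\bx_p}^k f_0$, and reassemble into the Bessel form. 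The convergence step you flag as the main obstacle is handled in the paper exactly as you propose, by deferring to the classical estimate scheme (the method of Delanghe--Sommen--Sou\v{c}ek, Theorem 5.1.1, which is the same argument used for Theorem \ref{slice-Cauchy-Kovalevsky-extension}).
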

\begin{proof}From the following  basic identities, for all $k\in \mathbb{N}$,
$$D_{\underline{\bx}_q} \underline{\bx}_q^{2k} =-2k \underline{\bx}_q^{2k-1},$$
$$D_{\underline{\bx}_q}\underline{\bx}_q^{2k+1} =-(2k+q) \underline{\bx}_q^{2k},$$
$$D_{\bx_p} \underline{\bx}_q^{2k} =\underline{\bx}_q^{2k}D_{\bx_p},  $$
$$D_{\bx_p}\underline{\bx}_q^{2k+1}  =\underline{\bx}_q^{2k+1}\overline{D}_{\bx_p},$$
where $D_{\bx_p}$ and  $\underline{\bx}_q^{k}$ in last two formulae are both considered as operators on functions,
and applying the operator $(D_{\bx_p} +D_{\underline{\bx}_q})$ to the  monogenic sum $f(\bx)$ in (\ref{Cauchy-Kovalevska-extension-sum}), we have  for all $k\in \mathbb{N}$
\begin{eqnarray*}\label{inter-relation-f}
 \left\{
\begin{array}{ll}
(2k+q) f_{2k+1}(\bx_{p})=D_{\bx_p} f_{2k}(\bx_{p}),
\\
(2k+2) f_{2k+2}(\bx_{p})=\overline{D}_{\bx_p} f_{2k+1}(\bx_{p}).
\end{array}
\right.
\end{eqnarray*}
From these iterative relations above,  we infer that
\begin{eqnarray*}
f_{2k}(\bx_{p})& =&\frac{1}{2k}\overline{D}_{\bx_p} f_{2k-1}(\bx_{p})\\
 &=&\frac{1}{2k(2k+q-2)}   \overline{D}_{\bx_p} D_{\bx_p} f_{2k-2}(\bx_{p})
 \\
&=&\frac{1}{2k(2k+q-2)}   \Delta_{\bx_p} f_{2k-2}(\bx_{p})\\
&=&\cdots
\\
&=&\frac{1}{(2k)!!(2k+q-2)(2k+q-4)\cdots q}   \Delta_{\bx_p}^{k} f_{0}(\bx_{p})
\\
&=&\frac{\Gamma(\frac{q}{2})}{2^{2k}k!\Gamma(k+\frac{q}{2})}   \Delta_{\bx_p}^{k} f_{0}(\bx_{p}),
\end{eqnarray*}
and
\begin{eqnarray*}
 f_{2k+1}(\bx_{p})& =&\frac{1}{2k+q} D_{\bx_p} f_{2k}(\bx_{p})\\
 &=& \frac{1}{2k(2k+q)}  D_{\bx_p} \overline{D}_{\bx_p} f_{2k-1}(\bx_{p})
 \\
&=&\frac{1}{2k(2k+q)}  \Delta_{\bx_p} f_{2k-1}(\bx_{p})\\
&=&\cdots
\\
&=&\frac{1}{(2k)!!(2k+q)(2k+q-2)\cdots (q+2)}   \Delta_{\bx_p}^{k} f_{1}(\bx_{p}),
\\
&=&\frac{1}{(2k)!!(2k+q)(2k+q-2)\cdots (q+2)q}   \Delta_{\bx_p}^{k} D_{\bx_p} f_{0}(\bx_{p})
\\
&=&\frac{\Gamma(\frac{q}{2})}{2^{2k+1}k!\Gamma(k+\frac{q}{2}+1)} D_{\bx_p}  \Delta_{\bx_p}^{k} f_{0}(\bx_{p}).
\end{eqnarray*}
Consequently,  the expression of $f$ is given by
$$ f(\bx)=\sum_{k=0}^{+\infty} \frac{\Gamma(\frac{q}{2})(-1)^{k}r^{2k}}{2^{2k}k!\Gamma(k+\frac{q}{2})}   \Delta_{\bx_p}^{k} f_{0}(\bx_{p})+
\underline{\bx}_q  D_{\bx_p}  \sum_{k=0}^{+\infty} \frac{\Gamma(\frac{q}{2})(-1)^{k}r^{2k}}{2^{2k+1}k!\Gamma(k+\frac{q}{2}+1)} \Delta_{\bx_p}^{k} f_{0}(\bx_{p}),$$
that is to say $f$ takes the form of (\ref{Cauchy-Kovalevska-extension-expression}). Finally, following the classical method as in \cite[Theorem 5.1.1]{Delanghe-Sommen-Soucek}, one can show verbatim that the series converges in a  p-symmetric slice domain $\Omega \subseteq \Omega_{0}^{\ast}$ with $\Omega_{0} \subset \Omega$. The proof is complete.
\end{proof}

Denote by $\mathcal{A}(\Omega_{0})$ the set of all $\mathbb{R}_{p+q}$-valued  real analytic  functions defined in $ \Omega_{0} \subseteq \mathbb{R}^{p+1}$.
The slice monogenic versions of the next theorem was proved in the paper \cite{De-Diki-Adan}.

\begin{theorem}\label{CK-Fueter-relation}
Let $\Omega\subseteq \mathbb{R}^{p+q+1}$ be a p-symmetric slice domain and $f:\Omega\rightarrow \mathbb{R}_{p+q}$ be a  generalized partial-slice monogenic function. Set $f(\bx)=F_1(\bx')+\underline{\omega} F_2(\bx')$ and $\Omega_{0}=\Omega\cap \mathbb{R}^{p+1}$.   Then,  for odd $q\in \mathbb{N}$,
we have
 $$\tau_{q}f(\bx) = GCK [(\frac{1}{r}\partial_{r})^{\frac{q-1}{2}}F_1(\bx_{p},0 )]
 =\frac{1}{(q-2)!!} GCK [ (-\Delta_{\bx_{p}})^{\frac{q-1}{2}}f(\bx_{p}) ],$$
which gives the  commutative diagram
$$\xymatrix{
 \mathcal{A}(\Omega_{0}) \ar[d]_{\frac{1}{(q-2)!!}(-\Delta_{\bx_{p}})^{\frac{q-1}{2}}} \ar[r]^{CK} & \mathcal{GSM}(\Omega) \ar[d]^{\tau_{q}} \\
  \mathcal{A}(\Omega_{0}) \ar[r]^{GCK} & \mathcal{AM}(\Omega).  }$$
 \end{theorem}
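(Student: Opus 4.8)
The plan is to exploit the fact, established in Theorem \ref{Fueter-theorem}, that $\tau_q f = A(\bx') + \underline{\omega} B(\bx')$ is axially monogenic, i.e. lies in $\mathcal{AM}(\Omega)$. An axially monogenic function on a p-symmetric slice domain is uniquely determined by its restriction to $\Omega_0 = \Omega \cap \mathbb{R}^{p+1}$ through the generalized CK-extension, by the uniqueness asserted in Theorem \ref{generalized-CK-extension}. Hence the first identity follows once I compute this restriction. Setting $r = 0$ in $\tau_q f = A + \underline{\omega} B$ and recalling that $B = (\partial_r \frac{1}{r})^{\frac{q-1}{2}} F_2$ is odd in the last variable (since $F_2$ is, by Remark \ref{remodd}), so that $B(\bx_p,0) = 0$, I obtain $\tau_q f|_{\Omega_0}(\bx_p) = A(\bx_p,0) = (\frac{1}{r}\partial_r)^{\frac{q-1}{2}} F_1(\bx_p,0)$, the value at $r=0$ being the limit fixed in Remark \ref{0-lim}. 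Uniqueness of the GCK-extension then gives $\tau_q f = GCK[(\frac{1}{r}\partial_r)^{\frac{q-1}{2}} F_1(\bx_p,0)]$.

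For the second identity I would rewrite $F_1$ in terms of $\Delta_{\bx_p}$. Since $\Omega$ is a p-symmetric slice domain with $\Omega_0 \neq \emptyset$, Theorem \ref{relation-GSR-GSM}(ii) gives $f \in \mathcal{GSR}(\Omega_D)$, so $f$ is real analytic by Proposition \ref{slice-monogenic-real-analytic} and its restriction $f_0 := f(\bx_p) = F_1(\bx_p,0)$ is real analytic on $\Omega_0$. By uniqueness of the slice CK-extension we have $f = CK[f_0]$, and the even-odd splitting recorded in Remark \ref{Cauchy-Kovalevska-extension-odd-even} identifies the even component as $F_1(\bx_p,r) = \sum_{k=0}^{\infty} \frac{r^{2k}}{(2k)!} (-\Delta_{\bx_p})^k f_0(\bx_p)$. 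Writing $m = \frac{q-1}{2}$ and substituting $s = r^2$, so that $\frac{1}{r}\partial_r = 2\partial_s$, the operator $(\frac{1}{r}\partial_r)^m$ applied term by term sends $r^{2k}$ to $2^m \frac{k!}{(k-m)!} r^{2(k-m)}$; at $r=0$ only the term $k=m$ survives, yielding $(\frac{1}{r}\partial_r)^m F_1(\bx_p,0) = \frac{2^m m!}{(2m)!}(-\Delta_{\bx_p})^m f_0 = \frac{1}{(2m-1)!!}(-\Delta_{\bx_p})^m f_0$. Since $2m-1 = q-2$, this is exactly $\frac{1}{(q-2)!!}(-\Delta_{\bx_p})^{\frac{q-1}{2}} f(\bx_p)$, and applying the linear GCK-operator to both sides gives the second equality.

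Finally, the commutative diagram is a direct reformulation: the top-then-right composite is $\tau_q(CK[f_0]) = \tau_q f$, while the left-then-bottom composite is $GCK[\frac{1}{(q-2)!!}(-\Delta_{\bx_p})^{\frac{q-1}{2}} f_0]$, and the two identities just proved show they agree. The routine parts are the termwise differentiation and the double-factorial bookkeeping $\frac{2^m m!}{(2m)!} = 1/(2m-1)!!$. \textbf{The main obstacle} I anticipate is justifying the two identification steps cleanly, namely that $\tau_q f$ coincides with the GCK-extension of its real restriction and that $f = CK[f_0]$, so that the termwise manipulations are legitimate; both rest on real analyticity (Proposition \ref{slice-monogenic-real-analytic}) together with the uniqueness statements in Theorems \ref{generalized-CK-extension} and \ref{slice-Cauchy-Kovalevsky-extension}, and on the normal convergence of the CK-series that permits differentiating under the summation sign.
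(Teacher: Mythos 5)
Your proposal is correct and follows essentially the same route as the paper's own proof: both establish the first identity by restricting $\tau_q f=A+\underline{\omega}B$ to $r=0$ (using the oddness of $B$) and invoking the uniqueness in Theorem \ref{generalized-CK-extension}, and both obtain the second identity by expanding $F_1(\bx_p,r)=\sum_{k\ge 0}\frac{r^{2k}}{(2k)!}(-\Delta_{\bx_p})^k f(\bx_p)$ via Remark \ref{Cauchy-Kovalevska-extension-odd-even} together with the identity theorem, then applying $(\frac{1}{r}\partial_r)^{\frac{q-1}{2}}$ termwise and evaluating at $r=0$ to get the factor $\frac{1}{(q-2)!!}$. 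The only cosmetic differences are your explicit appeal to Theorem \ref{relation-GSR-GSM}(ii) and Proposition \ref{slice-monogenic-real-analytic} where the paper cites Theorem \ref{Identity-theorem}, and your substitution $s=r^2$ in the double-factorial bookkeeping.
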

 \begin{proof}
By Theorem \ref{Fueter-theorem-GSM}, we have
  $$\tau_{q}f(\bx)=A(\bx')+\underline{\omega} B(\bx') \in \mathcal {M}(\Omega),$$
 where
 $$A(\bx') =  (\frac{1}{r}\partial_{r})^{\frac{q-1}{2}}F_1(\bx'),\quad  B(\bx')=  (\partial_{r}\frac{1}{r})^{\frac{q-1}{2}}F_2(\bx').$$
 Since $B(\bx_{p}, r)$ is odd in the variable $r$, we  obtain that
 $$\tau_{q}f(\bx)\mid_{\underline{\bx}_{q}=0}=A(\bx_{p},0) =  (\frac{1}{r}\partial_{r})^{\frac{q-1}{2}}[F_1] (\bx_{p},0).$$
 From the generalized CK-extension (Theorem \ref{generalized-CK-extension}), the   monogenic function $\tau_{q}f$ is completely determined by its restriction to $\mathbb{R}^{p+1}$, which shows
$$\tau_{q}f(\bx)= GCK [(\frac{1}{r}\partial_{r})^{\frac{q-1}{2}}[F_1](\bx_{p},0 )](\bx).$$

On the other hand, by Remark \ref{Cauchy-Kovalevska-extension-odd-even} and  Theorem \ref{Identity-theorem}, it holds that
$$F_1(\bx')= \sum_{k=0}^{+\infty} \frac{r^{2k}}{(2k)!}  (-\Delta_{\bx_p})^{k} f(\bx_{p})=
\sum_{k=0}^{+\infty} \frac{r^{2k}}{(2k)!}  (-\Delta_{\bx_p})^{k} [F_1](\bx_{p},0 ),$$
which gives that for all $l\in \mathbb{N}$
$$(\frac{1}{r}\partial_{r})^{l}[F_1](\bx')=\sum_{k=l}^{+\infty} \frac{(2k)(2k-2)\cdots (2k-2l+2)}{(2k)!}  r^{2k-2l}(-\Delta_{\bx_p})^{k} [F_1](\bx_{p},0 ).$$
In particular,
$$(\frac{1}{r}\partial_{r})^{l}[F_1](\bx_{p},0)=  \frac{(2l)!!}{(2l)!} (-\Delta_{\bx_p})^{l} [F_1](\bx_{p},0 )
=\frac{1}{(2l-1)!!} (-\Delta_{\bx_p})^{l} f(\bx_{p}),$$
in which letting $l=\frac{q-1}{2}$ we obtain
$$(\frac{1}{r}\partial_{r})^{\frac{q-1}{2}}[F_1](\bx_{p},0)= \frac{1}{(q-2)!!} (-\Delta_{\bx_p})^{\frac{q-1}{2}} f(\bx_{p}).$$
The proof is complete.
\end{proof}
\section{The Radon transform and its dual}
As we discussed in Section 3, the Fueter-Sce  mapping theorem provides a link between generalized partial-slice monogenic functions and monogenic functions, see Theorem \ref{Fueter-theorem-GSM}. However, like in the standard slice monogenic case, see \cite{Colombo-Sabadini-Soucek-15}, we have another transform providing this link, namely the dual Radon transform.

We recall that given  $f:\mathbb{R}^{p+q+1} \longrightarrow  \mathcal{C}l_{p+q}$, the  Radon transform $R$ of   $f$  is defined by
$$R[f](\bx_p,r,\underline{\omega})= \int_{L(\underline{\omega},r)} f(\bx_p,\underline{\bx}_q) d\sigma(\underline{\bx}_q)$$
whenever the integral exists, where $ d\sigma$  is  the Lebesgue measure on the hyperplane $$L(\underline{\omega},r)=\{\underline{\bx}_q \in   \mathbb{R}^q :\langle\underline{\bx}_q,\underline{\omega} \rangle=r \}.$$
The dual Radon transform $\check{R}$ of a continuous function $f$ is defined by
$$\check{R} [f](\bx_p,\underline{\bx}_q)=\frac{1}{A_{q}}\int_{\mathbb{S}} f(\bx_p,\langle\underline{\bx}_q,\underline{\eta} \rangle\underline{\eta}) dS(\underline{\eta})$$
where $\langle \cdot, \cdot\rangle$ is the scalar product on $\mathbb{R}^{q}$, $dS$  is the area element on the sphere $\mathbb{S}$ and  $A_{q}$ is the area of $\mathbb{S}$.

For more details and results on the  Radon transform we refer the reader to \cite{Helgason}.  Note  that the variable $\bx_p\in \mathbb{R}^{p+1}$ plays a role of parameter with respect to  transforms   $R$ and $\check{R}$.  We have the following immediate result.

\begin{lemma}\label{slice-monogenic-relation}
For real  analytic functions $f:\Omega\subseteq\mathbb{R}^{p+q+1} \longrightarrow  \mathcal{C}l_{p+q}$ with suitable integrability property, we have
 $$\check{R}[D_{\underline{\omega}}f]=D_{\bx}(\check{R}[f]),$$
and $$R[D_{\bx}f]=D_{\underline{\omega}}(R[f]).$$
\end{lemma}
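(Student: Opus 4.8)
The plan is to establish both intertwining relations by a direct computation, differentiating under the integral sign; in each case the decisive point is that the Clifford gradient $D_{\underline{\bx}_q}=\sum_{i=p+1}^{p+q}e_i\partial_{x_i}$ in the $q$ sphere-variables turns, through the chain rule, into the radial derivative $\partial_r$ multiplied on the left by the direction, which is precisely the term appearing in the slice operator $D_{\underline{\omega}}=D_{\bx_p}+\underline{\omega}\partial_r$.

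For the first identity I would start from the right-hand side. Writing $g=\check{R}[f]$, so that $g(\bx_p,\underline{\bx}_q)=\frac{1}{A_q}\int_{\mathbb{S}}f(\bx_p,\langle\underline{\bx}_q,\underline{\eta}\rangle\underline{\eta})\,dS(\underline{\eta})$, I apply $D_{\bx}=D_{\bx_p}+D_{\underline{\bx}_q}$. Since $D_{\bx_p}$ differentiates only in $\bx_p$, it passes through the integral and the substitution to give $\frac{1}{A_q}\int_{\mathbb{S}}(D_{\bx_p}f)\,dS$. For $D_{\underline{\bx}_q}$, the integrand depends on $\underline{\bx}_q$ only through $\langle\underline{\bx}_q,\underline{\eta}\rangle\underline{\eta}$, so $\partial_{x_i}$ produces a factor $\eta_i$ together with the derivative of $f$ along $\underline{\eta}$; since $\langle\underline{\bx}_q,\underline{\eta}\rangle\underline{\eta}=r\underline{\eta}$ with $r=\langle\underline{\bx}_q,\underline{\eta}\rangle$, this derivative is exactly $\partial_r f$. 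Summing against $e_i$ collapses $\sum_i\eta_i e_i=\underline{\eta}$ to the left of $\partial_r f$, whence $D_{\underline{\bx}_q}g=\frac{1}{A_q}\int_{\mathbb{S}}\underline{\eta}(\partial_r f)\,dS$. Adding the two contributions, the integrand becomes $(D_{\bx_p}+\underline{\eta}\partial_r)f=D_{\underline{\omega}}f$ evaluated at $\langle\underline{\bx}_q,\underline{\eta}\rangle\underline{\eta}$, and the result is precisely $\check{R}[D_{\underline{\omega}}f]$. This step is essentially routine once differentiation under the integral is justified.

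For the second identity I would again split $D_{\bx}=D_{\bx_p}+D_{\underline{\bx}_q}$. The part $D_{\bx_p}$ commutes with integration over the hyperplane $L(\underline{\omega},r)$ and yields $D_{\bx_p}R[f]$. The core is the tangential part, for which I would establish the classical intertwining relation $R[\partial_{x_i}f]=\omega_i\,\partial_r R[f]$. To this end, parametrize $\underline{\bx}_q=r\underline{\omega}+\underline{y}$ with $\underline{y}\in\underline{\omega}^{\perp}$, so that $R[f](\bx_p,r,\underline{\omega})=\int_{\underline{\omega}^{\perp}}f(\bx_p,r\underline{\omega}+\underline{y})\,d\underline{y}$ and $\partial_r R[f]=\int_{\underline{\omega}^{\perp}}\big(\sum_j\omega_j\partial_{x_j}f\big)\,d\underline{y}$. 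Writing $\partial_{x_i}=\omega_i\big(\sum_j\omega_j\partial_{x_j}\big)+\big(\partial_{x_i}-\omega_i\sum_j\omega_j\partial_{x_j}\big)$, the second operator is differentiation along a direction tangent to $\underline{\omega}^{\perp}$, so its integral over $\underline{\omega}^{\perp}$ vanishes, leaving $R[\partial_{x_i}f]=\omega_i\,\partial_r R[f]$. Summing against $e_i$ gives $R[D_{\underline{\bx}_q}f]=\sum_i e_i\omega_i\,\partial_r R[f]=\underline{\omega}\,\partial_r R[f]$, and combining with the $D_{\bx_p}$ term yields $(D_{\bx_p}+\underline{\omega}\partial_r)R[f]=D_{\underline{\omega}}R[f]$.

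The main obstacle is the vanishing of the tangential integral in the second identity: this is exactly where the hypothesis of a ``suitable integrability property'' enters, guaranteeing that $f(\bx_p,\cdot)$ decays fast enough (or has compact support) in $\underline{\bx}_q$ so that the boundary contributions of the tangential derivative at infinity disappear, and that all interchanges of the Clifford-valued operators with the integrals are legitimate. The real analyticity of $f$ ensures the pointwise chain-rule manipulations are valid; apart from this analytic bookkeeping, both identities reduce to the single algebraic observation that $\sum_i\eta_i e_i=\underline{\eta}$ (respectively $\sum_i\omega_i e_i=\underline{\omega}$) converts the radial derivative into the slice operator.
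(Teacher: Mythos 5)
Your proposal is correct and takes essentially the same approach as the paper: the paper's proof is only a citation to Lemmas 4.2 and 5.3 of \cite{Colombo-Sabadini-Soucek-15}, whose arguments are precisely your two computations --- differentiation under the integral sign plus the chain rule for $\check{R}$, and for $R$ the classical intertwining relation $R[\partial_{x_i}f]=\omega_i\,\partial_r R[f]$ obtained because tangential derivatives integrate to zero over the hyperplane (this decay requirement being exactly the ``suitable integrability property''). You have simply written out in full the details that the paper delegates to that reference.
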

\begin{proof}
The statements are proved reasoning as in the proof of \cite[Lemma 4.2 ]{Colombo-Sabadini-Soucek-15} and \cite[Lemma 5.3]{Colombo-Sabadini-Soucek-15}.
\end{proof}

To obtain the relation of the generalized CK-extension and the dual Radon transform,  we define a Cauchy-Kovalevskaja extension for a fixed direction.
\begin{definition}[CK-extension for a fixed direction]\label{Cauchy-Kovalevska-extension-fixed}
Let $\underline{\eta}\in \mathbb{S}$, $\Omega_{0}\subseteq \mathbb{R}^{p+1}$ be a domain, and $f_{0}: \Omega_{0}  \to \mathbb{R}_{p+q}$ be a real analytic function. Define
$$CK[f_{0}, \underline{\eta}](\bx)= {\rm exp} ( \langle \underline{\eta},\underline{\bx}_{q}\rangle \underline{\eta} D_{\bx_p} ) f_{0}(\bx_p)
=\sum_{k=0}^{+\infty} \frac{ \langle \underline{\eta},\underline{\bx}_{q}\rangle  ^{k}}{k!}( \underline{\eta} D_{\bx_p} )^{k}f_{0}(\bx_p).$$
\end{definition}

Using the same method of proving  Theorem \ref{slice-Cauchy-Kovalevsky-extension}, it can be shown that  the series in Definition \ref{Cauchy-Kovalevska-extension-fixed} converges   in a p-symmetric slice domain $\Omega\subseteq \Omega_{0}^{\ast}$ with $\Omega_{0} \subset \Omega$.

 \begin{lemma} \label{Cauchy-Kovalevska-extension-fixed-lemma}
Let $\underline{\eta}\in \mathbb{S}$, $\Omega_{0}\subseteq \mathbb{R}^{p+1}$ be a domain, and $f_{0}: \Omega_{0}  \to \mathbb{R}_{p+q}$ be a real analytic function. Then
 $$\partial_{\underline{\eta}}  CK[f_{0}, \underline{\eta}](\bx) =0, \  D_{\bx} CK[f_{0}, \underline{\eta}](\bx)=0, \quad  \bx\in\Omega,$$
where $\partial_{\underline{\eta}}= D_{\bx_p}+ \underline{\eta}  \langle \underline{\eta},D_{\underline{\bx}_{q}}\rangle$ and $\Omega$ is a p-symmetric slice  domain $\Omega\subseteq \Omega_{0}^{\ast}$ with $\Omega_{0} \subset \Omega$.
\end{lemma}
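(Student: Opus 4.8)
The plan is to exploit the fact that, by its very definition, $CK[f_{0}, \underline{\eta}](\bx)$ depends on the variable $\underline{\bx}_{q}$ only through the \emph{scalar} quantity $t:=\langle \underline{\eta},\underline{\bx}_{q}\rangle$. For any $\mathbb{R}_{p+q}$-valued function depending on $\underline{\bx}_{q}$ solely through $t$ one has $\partial_{x_i}=\eta_{i}\partial_{t}$ for $p+1\le i\le p+q$, and therefore, using $|\underline{\eta}|^{2}=1$,
$$D_{\underline{\bx}_{q}} CK[f_{0}, \underline{\eta}] = \underline{\eta}\, \partial_{t} CK[f_{0}, \underline{\eta}] = \underline{\eta}\, \langle \underline{\eta},D_{\underline{\bx}_{q}}\rangle CK[f_{0}, \underline{\eta}].$$
Consequently the two operators $D_{\bx}=D_{\bx_p}+D_{\underline{\bx}_{q}}$ and $\partial_{\underline{\eta}}=D_{\bx_p}+\underline{\eta}\langle \underline{\eta},D_{\underline{\bx}_{q}}\rangle$ \emph{coincide} when applied to $CK[f_{0}, \underline{\eta}]$. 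Thus the two claimed identities are in fact one and the same, and it suffices to establish either of them.

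Next I would carry out the core computation, which runs parallel to the one in the proof of Theorem \ref{slice-Cauchy-Kovalevsky-extension}. Since the defining series converges normally on the stated p-symmetric slice domain (as remarked right after Definition \ref{Cauchy-Kovalevska-extension-fixed}), it may be differentiated term by term, giving
$$\partial_{t} CK[f_{0}, \underline{\eta}](\bx) = \sum_{k=0}^{+\infty} \frac{t^{k}}{k!}( \underline{\eta} D_{\bx_p})^{k+1}f_{0}(\bx_p).$$
Multiplying on the left by $\underline{\eta}$ and using the associativity of left Clifford multiplication together with $\underline{\eta}^{2}=-1$, each summand obeys $\underline{\eta}(\underline{\eta} D_{\bx_p})^{k+1}f_{0}=\underline{\eta}^{2}D_{\bx_p}(\underline{\eta} D_{\bx_p})^{k}f_{0}=-D_{\bx_p}(\underline{\eta} D_{\bx_p})^{k}f_{0}$, so that
$$\underline{\eta}\,\partial_{t} CK[f_{0}, \underline{\eta}](\bx) = -D_{\bx_p}\sum_{k=0}^{+\infty} \frac{t^{k}}{k!}( \underline{\eta} D_{\bx_p})^{k}f_{0}(\bx_p) = -D_{\bx_p} CK[f_{0}, \underline{\eta}](\bx).$$
Combining this with the identity of the first paragraph yields $D_{\underline{\bx}_{q}} CK[f_{0}, \underline{\eta}] = -D_{\bx_p} CK[f_{0}, \underline{\eta}]$, whence both $D_{\bx} CK[f_{0}, \underline{\eta}]=0$ and $\partial_{\underline{\eta}} CK[f_{0}, \underline{\eta}]=0$ follow simultaneously.

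I expect essentially no genuine obstacle here. The only point requiring care is the legitimacy of the term-by-term differentiation of the exponential series in $t$ (equivalently, in $\underline{\bx}_{q}$), which is guaranteed by the normal convergence already secured by the argument of Theorem \ref{slice-Cauchy-Kovalevsky-extension}. Everything else reduces to the purely algebraic relation $\underline{\eta}^{2}=-1$ propagated through the series, exactly as the identity $(\underline{\omega}\partial_{r}) CK[f_{0}]=-D_{\bx_p}CK[f_{0}]$ was employed there.
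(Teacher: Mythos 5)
Your proof is correct and takes essentially the same route as the paper's: the core step is the identity $\underline{\eta}\,\partial_t CK[f_0,\underline{\eta}]=-D_{\bx_p}CK[f_0,\underline{\eta}]$ obtained by term-by-term differentiation and $\underline{\eta}^2=-1$, combined with the observation that $CK[f_0,\underline{\eta}]$ depends on $\underline{\bx}_q$ only through $\langle\underline{\eta},\underline{\bx}_q\rangle$, which you express via the chain rule and the paper expresses via an orthonormal basis $\{I_1=\underline{\eta},I_2,\ldots,I_q\}$ of $\mathbb{R}^q$ --- the same fact in two guises. The only (harmless) organizational difference is that you first show $D_{\bx}$ and $\partial_{\underline{\eta}}$ agree on this function and then verify a single identity, whereas the paper proves $\partial_{\underline{\eta}}CK[f_0,\underline{\eta}]=0$ first and then reduces $D_{\bx}CK[f_0,\underline{\eta}]=0$ to it.
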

  \begin{proof}
First, note that the CK-extension for a fixed direction has a decomposition of that
\begin{eqnarray*}
&&CK[f_{0}, \underline{\eta}](\bx)\\
 &=&\sum_{k=0}^{+\infty} \frac{ \langle \underline{\eta},\underline{\bx}_{q}\rangle  ^{2k}}{(2k)!}(-\Delta_{\bx_p} )^{k}f_{0}(\bx_p)+ \underline{\eta} D_{\bx_p}\sum_{k=0}^{+\infty} \frac{ \langle \underline{\eta},\underline{\bx}_{q}\rangle  ^{2k+1}}{(2k+1)!}(-\Delta_{\bx_p} )^{k}f_{0}(\bx_p)
 \\
&=&\sum_{k=0}^{+\infty} \frac{( \langle \underline{\eta},\underline{\bx}_{q}\rangle \underline{\eta}) ^{2k}}{(2k)!} \Delta_{\bx_p}^{k}f_{0}(\bx_p)+ \sum_{k=0}^{+\infty} \frac{ (\langle \underline{\eta},\underline{\bx}_{q}\rangle\underline{\eta})^{2k+1}}{(2k+1)!}\Delta_{\bx_p}^{k}D_{\bx_p}f_{0}(\bx_p).
\end{eqnarray*}
From the basic  formula
$$\langle \underline{\eta},D_{\underline{\bx}_{q}}\rangle   \langle \underline{\eta},\underline{\bx}_{q}\rangle^{k} =
 k \langle \underline{\eta},\underline{\bx}_{q}\rangle^{k-1}, $$
we obtain
\begin{eqnarray*}\underline{\eta}  \langle \underline{\eta},D_{\underline{\bx}_{q}}\rangle    CK[f_{0}, \underline{\eta}]
 &=&\underline{\eta}  \sum_{k=1}^{+\infty} \frac{\langle \underline{\eta},\underline{\bx}_{q}\rangle^{k-1}}{(k-1)!}(\underline{\eta}  D_{\bx_p})^{k}f_{0}(\bx_p)
 \\
&=&\underline{\eta}(\underline{\eta}  D_{\bx_p} )\sum_{k=1}^{+\infty} \frac{\langle \underline{\eta},\underline{\bx}_{q}\rangle^{k-1}}{(k-1)!}(\underline{\eta}  D_{\bx_p})^{k-1}f_{0}(\bx_p)
 \\
&=&-D_{\bx_p}CK[f_{0}, \underline{\eta}],
\end{eqnarray*}
which shows the first stated result $\partial_{\underline{\eta}}CK[f_{0}, \underline{\eta}]=0.$

To see the second result, we write
$$ D_{\bx}=D_{\bx_p}+ D_{\underline{\bx}_q}=D_{\bx_p}+ \sum_{l=1}^{q} I_{l}  \langle I_{l} ,D_{\underline{\bx}_{q}}\rangle,$$
where $\{I_{1}=  \underline{\eta}, I_2,..., I_{q}\}$ is an orthonormal basis of $\mathbb{R}^{q}$.\\
From the identity
$$  \langle I_{l} ,D_{\underline{\bx}_{q}}\rangle CK[f_{0}, \underline{\eta}]=  \langle I_{l},\underline{\eta}\rangle \underline{\eta} D_{\bx_p} CK[f_{0}, \underline{\eta}]=0,\quad l=2,3,...,q,$$
we infer that
$$ D_{\bx} CK[f_{0}, \underline{\eta}] =\partial_{\underline{\eta}}  CK[f_{0}, \underline{\eta}] =0,$$
which finishes the proof.
\end{proof}
The slice monogenic version of next result was originally proved in \cite{De-Diki-Adan}. Here we prove its counterpart for generalized partial-slice monogenic functions.
\begin{theorem}\label{GCK-Dual-relation}
Let $\Omega_{0}$ be a domain in  $\mathbb{R}^{p+1}$ and $f_{0}: \Omega_{0}\to \mathbb{R}_{p+q}$ be a real analytic function. Then
$$GCK[f_{0}](\bx)=\frac{1}{A_{q}}\int_{\mathbb{S}} CK[f_{0}, \underline{\eta}](\bx) dS(\underline{\eta})=\check{R} [f](\bx), \quad  \bx\in\Omega,$$
where $f=CK[f_{0}]$ is the slice CK-extension in a  p-symmetric slice domain    $\Omega\subseteq \Omega_{0}^{\ast}$ with $\Omega_{0} \subset \Omega$. That is to say $GCK=\check{R} \circ CK$, which gives the  commutative diagram
$$\xymatrix{
\mathcal{ A}(\Omega_{0}) \ar[dr]_{GCK} \ar[r]^{CK}
                & \mathcal{GSM}(\Omega) \ar[d]^{\check{R}}  \\
                & \mathcal{AM}(\Omega).           }$$
\end{theorem}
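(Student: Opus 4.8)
The statement bundles together two equalities, and the plan is to prove each by a separate, short argument: the right-hand one by recognizing the fixed-direction CK-extension as a restriction of $f=CK[f_0]$, and the left-hand one by the uniqueness built into the generalized CK-extension.

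The key observation, which I would establish first, is that for every $\underline\eta\in\mathbb S$
$$CK[f_0,\underline\eta](\bx)=f\big(\bx_p,\langle\underline{\bx}_q,\underline\eta\rangle\underline\eta\big),\qquad f=CK[f_0].$$
This is immediate by comparing series: substituting $\underline{\bx}_q\mapsto\langle\underline{\bx}_q,\underline\eta\rangle\underline\eta$ into $f(\bx_p+\underline{\bx}_q)=\sum_{k}\frac1{k!}(\underline{\bx}_q D_{\bx_p})^k f_0(\bx_p)$ replaces $(\underline{\bx}_q D_{\bx_p})^k$ by $\langle\underline{\bx}_q,\underline\eta\rangle^k(\underline\eta D_{\bx_p})^k$, which is exactly the defining series of Definition \ref{Cauchy-Kovalevska-extension-fixed}. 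Plugging this identity into the definition of $\check R$ gives the right-hand equality at once, since $\langle\underline{\bx}_q,\underline\eta\rangle\underline\eta$ is precisely the point at which $\check R[f]$ samples $f$.

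For the left-hand equality I would argue that the averaged function is the generalized CK-extension of $f_0$. Since $D_{\bx}$ does not act on the integration variable $\underline\eta$, differentiating under the integral and invoking $D_{\bx}CK[f_0,\underline\eta]=0$ from Lemma \ref{Cauchy-Kovalevska-extension-fixed-lemma} shows the average is monogenic on $\Omega$; evaluating at $\underline{\bx}_q=0$ leaves only the $k=0$ term, so the average reduces to $\frac1{A_q}\int_{\mathbb S}f_0(\bx_p)\,dS=f_0(\bx_p)$. It remains to see the average is of generalized partial-slice (axial) type so that the uniqueness statement of Theorem \ref{generalized-CK-extension} applies. Here I would feed in the even--odd decomposition of $CK[f_0,\underline\eta]$ recalled in the proof of Lemma \ref{Cauchy-Kovalevska-extension-fixed-lemma} together with the standard spherical moments $\frac1{A_q}\int_{\mathbb S}\langle\underline\eta,\underline\omega\rangle^{2k}\,dS$ and $\frac1{A_q}\int_{\mathbb S}\langle\underline\eta,\underline\omega\rangle^{2k+1}\underline\eta\,dS$; averaging symmetrizes the series and produces exactly the form $F_1(\bx_p,r)+\underline\omega F_2(\bx_p,r)$ with $F_1$ even and $F_2$ odd in $r$. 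Being an axially monogenic function that restricts to $f_0$ on $\mathbb R^{p+1}$, it must coincide with $GCK[f_0]$ by uniqueness.

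The routine technical point is justifying the interchange of $D_{\bx}$, the summation, and the spherical integral, which follows from the normal convergence of the CK series on the p-symmetric slice domain $\Omega$. The one step I expect to carry the real content is the identification $CK[f_0,\underline\eta](\bx)=f(\bx_p,\langle\underline{\bx}_q,\underline\eta\rangle\underline\eta)$, which at a stroke yields the dual Radon transform and, after averaging over $\mathbb S$, the axial symmetry needed for CK-uniqueness. If one prefers to avoid the uniqueness argument, the left-hand equality can instead be checked by direct computation, matching the two spherical moments above against the $\Gamma$-factor coefficients in the closed form (\ref{Cauchy-Kovalevska-extension-expression}) of $GCK[f_0]$; in that route the only obstacle is the bookkeeping of factorials and Gamma values.
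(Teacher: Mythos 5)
Your proof is correct and follows the same overall strategy as the paper's: identify $CK[f_{0},\underline{\eta}](\bx)$ with $f(\bx_{p}+\langle\underline{\bx}_{q},\underline{\eta}\rangle\underline{\eta})$ for $f=CK[f_{0}]$ (which yields the equality with $\check{R}[f]$ at once), average over $\mathbb{S}$ using the Funk--Hecke moments to exhibit the axial form $\sum_{k}\underline{\bx}_{q}^{k}f_{k}(\bx_{p})$ whose restriction to $\mathbb{R}^{p+1}$ is $f_{0}$, and conclude by the uniqueness statement of Theorem \ref{generalized-CK-extension}. The one point where you diverge is the monogenicity of the average: the paper obtains $\check{R}[f]\in\mathcal{AM}(\Omega)$ from the intertwining relation $\check{R}[D_{\underline{\omega}}f]=D_{\bx}(\check{R}[f])$ of Lemma \ref{slice-monogenic-relation} applied to $f\in\mathcal{GSM}(\Omega)$, whereas you differentiate under the spherical integral and use $D_{\bx}CK[f_{0},\underline{\eta}]=0$ from Lemma \ref{Cauchy-Kovalevska-extension-fixed-lemma}. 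Both are valid; your route has the small advantage of resting only on a lemma that is proved in full in the paper (the proof of Lemma \ref{slice-monogenic-relation} is only cited from the literature), at the cost of having to justify interchanging $D_{\bx}$ with the integral and the summation, which, as you note, follows from the normal convergence of the CK series on $\Omega$. Your closing observation that the uniqueness argument could be replaced by directly matching the Funk--Hecke coefficients against the closed form (\ref{Cauchy-Kovalevska-extension-expression}) of $GCK[f_{0}]$ is also sound, and is essentially the computation the paper performs anyway when it writes out the averaged series.
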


\begin{proof}
From the proof of Lemma \ref{Cauchy-Kovalevska-extension-fixed-lemma}, we consider
$$g(\bx_{p},\langle \underline{\eta},\underline{\bx}_{q}\rangle \underline{\eta})=CK[f_{0}, \underline{\eta}](\bx),\quad  \bx\in\Omega.$$
From the Funk-Hecke formulas \cite[p. 172]{Delanghe-Sommen-Soucek}, it holds that
$$\int_{\mathbb{S}}  \langle \underline{\eta},\underline{\bx}_{q}\rangle  ^{2k} dS(\underline{\eta})= C(k,q) |\underline{\bx}_{q}|^{2k},$$
and
$$\int_{\mathbb{S}}  \underline{\eta} \langle \underline{\eta},\underline{\bx}_{q}\rangle  ^{2k+1} dS(\underline{\eta})=C(k+1,q)
|\underline{\bx}_{q}|^{2k}\underline{\bx}_{q},$$
where $C(k,q)=\int_{0}^{\pi} \cos^{2k}\theta \sin^{q-1}\theta d\theta$.\\
From these two equalities above, we infer that   the integral $g$ on ${\mathbb{S}}$ takes the form of
\begin{eqnarray*}
\int_{\mathbb{S}} g(\bx_p,\langle\underline{\bx}_q,\underline{\eta} \rangle\underline{\eta}) dS(\underline{\eta})
&=&\sum_{k=0}^{+\infty} \frac{ C(k,q)|\underline{\bx}_{q}|^{2k}}{(2k)!}(-\Delta_{\bx_p} )^{k}f_{0}(\bx_p)\\
& &
  +\underline{\bx}_{q}\sum_{k=0}^{+\infty} \frac{ C(k+1,q)|\underline{\bx}_{q}|^{2k}}{(2k+1)!}D_{\bx_p}(-\Delta_{\bx_p} )^{k}f_{0}(\bx_p),
\end{eqnarray*}
which says that  $\check{R} [f] $ is the generalized partial-slice function whose restriction to  $\mathbb{R}^{p+1}$ is given
by $f_{0}(\bx_p)$. This result, combined with Lemma \ref{slice-monogenic-relation}, leads to a conclusion $\check{R} [f] \in \mathcal{AM}(\Omega)$.  Consequently, the  uniqueness  of generalized CK-extension, see Theorem \ref{generalized-CK-extension},
leads  to that $GCK[f_{0}]=\check{R} [f]=\check{R} \circ CK[f_{0}]$, as desired.
\end{proof}





\vskip 10mm
\end{document}